\newcommand{\Rr}{\mathbb{R}}
\newcommand{\Ff}{\mathcal{F}}
\newcommand{\Gg}{\mathcal{G}}
\newcommand{\PC}{\mbox{int}(\mathcal{P})}
\newcommand{\Cuz}{C^1_0}
\newcommand{\Coc}{\mathfrak{C}}
\newcommand{\UB}{\overline{u}}
\newcommand{\VB}{\overline{v}}
\newcommand{\Um}{u_{_{\scriptscriptstyle M}}}
\newcommand{\Vm}{v_{_{\scriptscriptstyle M}}}
\newcommand{\MaxU}{u_{_{\scriptscriptstyle M}}}
\newcommand{\MaxV}{v_{_{\scriptscriptstyle M}}}
\newcommand{\MaxX}{x_{_{\scriptscriptstyle M}}}
\newcommand{\MaxY}{y_{_{\scriptscriptstyle M}}}
\newcommand{\MinA}{a_{_{\scriptscriptstyle L}}}
\newcommand{\Lb}{\overline{\lambda}}
\newcommand{\An}{a_\varepsilon}
\newcommand{\Que}{\left[  }
\newcommand{\Qud}{\right]  }
\newcommand{\Pae}{\hspace{-0.05cm}\left( }
\newcommand{\Pad}{\right) }
\newcommand{\Moe}{\left| }
\newcommand{\Mod}{\right| }
\newcommand{\Noe}{\left\|  }
\newcommand{\Nod}{\right\|  }
\newcommand{\NodI}{\right\|_{\infty}  }
\newcommand{\IntO}{\displaystyle\int_\Omega}
\newcommand{\Lapla}{\triangle}
\newcommand{\mI}{\leqslant}
\newcommand{\MI}{\geqslant}
\newcommand{\Id}{\mbox{Id}}
\newcommand{\Lu}{\lambda_1}
\newcommand{\Le}{\lambda^*}
\newcommand{\Su}{\sigma_1}
\newcommand{\Proj}{\mbox{Proj}}
\newcommand{\VPu}{\varphi_1}
\newcommand{\Pu}{\psi_1}
\newcommand{\Ker}{\mbox{Ker}}
\newcommand{\Img}{\mbox{Rg}}
\newcommand{\Cod}{\mbox{codim}}
\newcommand{\Dim}{\mbox{dim}}
\newcommand{\Span}{\mbox{span}}
\newcommand{\Vai}{\longrightarrow}
\newcommand{\Chega}{\longmapsto}
\newcommand{\FeOmega}{\Pae\overline{\Omega}\Pad}
\theoremstyle{definition} 
\newtheorem{Theo}{Theorem}[section]
\newtheorem{Prop}{Proposition}[section]
\newtheorem{remark}{Remark}[section]
\title[Nonlocal diffusion elliptic system]{Nonlocal diffusion elliptic system modelling the behaviour of a bacteria and a living nutrient}
\author[Y. B. C. Carranza, M. A. V. Costa, C. Morales-Rodrigo, A. Su\'arez]{M. A. V. Costa, Y. B. C. Carranza,  C. Morales-Rodrigo, A. Su\'arez}
\address[Y. B. C. Carranza, M. A. V. Costa]{Dpto. de Matem\'atica e Computa\c{c}\~ao, Universidade Estadual Paulista - Unesp, Brasil}
\email{yino.cueva@unesp.br, marcos.viana@unesp.br}
\address[C. Morales-Rodrigo, A. Su\'arez]{Dpto. EDAN and IMUS, University of Sevilla, C/ Tarfia s/n, 41013, Seville, Spain}
\email{cristianm@us.es, suarez@us.es}
\subjclass[INFO 1]{35B09, 35B32, 35J47, 92B05}
\keywords{nonlocal coefficient diffusion, coexistence states, bifurcation}
\begin{document}

\maketitle
\begin{abstract} 
In this paper, we discuss the existence and uniqueness of coexistence states for a class of non-local elliptic system. This problem models the behaviour of a bacteria and a living nutrient, whose diffusion depends on the population of the bacteria in a non-local and nonlinear way. Mainly, we employ bifurcation methods and the Implicit Function Theorem to obtain the existence and uniqueness of positive solution.
\end{abstract}
\section{Introduction}
In this paper, we deal with the existence of coexistence states of the following nonlocal and nonlinear elliptic system
	\begin{align}\tag{P}
		\begin{cases}\begin{array}{rll}
				-a\Pae\IntO v\Pad\Lapla u=&\hspace*{-0.25cm}\lambda u-u^2+buv&\mbox{ in }\Omega	,	\\
				-\Lapla v+\sigma v=&\hspace*{-0.25cm}\rho u &\mbox{ in }\Omega,			\\
				u=v=&\hspace*{-0.25cm}0&\mbox{ on }\partial\Omega ,
		\end{array}\end{cases}
	\end{align}
	where $ \Omega $ is a bounded regular domain of $ \Rr^N $, $ N\geqslant 1 $, $ a:\Rr\to [0,+\infty) $ is a continuous function, $ b\in\Rr $, $\rho\MI 0$, $ \sigma>0 $ and $\lambda\in\Rr$ will be considered as bifurcation parameter.\\
	
A particular system of (P) was introduced in [1] to model the behaviour of a bacteria, with density $v$, located in a container $\Omega$, and $u$ plays the role of the nutrient. Specifically, in [1] the first equation in (P) is
$$
-a\Pae\IntO v\Pad\Lapla u=f(x),\quad \mbox{in $\Omega$,}
$$
where $f$ is a constant rate of the nutrient. 

In our model, this nutrient is another living organism that grows following a logistic law, $\lambda u- u^2$, $\lambda$ is the growth rate, and interacts with the bacteria with rate $b$, in a competitive or cooperative way depending on the sign of $b$, negative in the first case, positive in the second one. In the particular case $b=0$, this interaction does not occur. Finally, the bacteria has a constant $\sigma$ death rate and a source rate $\rho$ depending only of the nutrients. 

Observe that the main novelty of (P) is that the diffusion of the nutrient depends on the population of the bacteria in a nonlocal and nonlinear way, see also [4], [7] and [10] for related works with nonlocal diffusivity term. 

Our main goal in this paper is to give sufficient conditions to assure the existence of a coexistence state, that is, a solution of (P) with both positive components. Let us describe our main results.

In the particular case $\rho=0$, that is, the bacteria does not receive nutrients, then the bacteria dies, $v=0$, and the nutrients follow the classical logistic equation
\begin{align}
		\begin{cases}\begin{array}{rll}
				-a(0)\Delta u=&\hspace*{-0.25cm}\lambda u-u^2&\mbox{ in }\Omega	,		\\
				u=&\hspace*{-0.25cm}0 &\mbox{ on }\partial\Omega,
		\end{array}\end{cases}
\end{align}
In this case, assuming that $a(0)>0$, it is well-known that (1) possesses a positive solution if and only if $\lambda>a(0)\lambda_1$, where $\lambda_1$ is the principal eigenvalue of the Lapalacian in $\Omega$ under homogeneous boundary conditions. 

Thus, from now on we assume that $\rho>0$. In this case, the results depend on the sign of $b$. We distinguish three cases:
\begin{itemize}
    \item[(i)] If $b=0$. Then, if $\lambda>a(0)\lambda_1$ then (P) possesses at least a coexistence state. Moreover, if $a$ is increasing, then (P) has a unique coexistence state.
    \item[(ii)]  If $b<0$. Then, (P) has at least a coexistence state if $\lambda>a(0)\lambda_1$ and (P) does not have coexistence state for $\lambda\leq \lambda_1\displaystyle\min_{s\geq 0}a(s) $. 
    \item[(iii)]  If $b>0$. Then, (P) has at least a coexistence state if $\lambda>a(0)\lambda_1$ if some of the following conditions holds:
    \begin{itemize}
        \item[(a)]  $b$ or $\rho$ is small or $\sigma$ is large; or
        \item[(b)]  
        \begin{align}
            \lim_{s\to \infty}\frac{a(s)}{s}=\infty.
        \end{align}
    \end{itemize}
    Moreover, (P) does not possess coexistence state if $\lambda\leq\lambda_0$ for some $\lambda_0\in \Rr$.
\end{itemize}

Although the results of existence in all cases are quite similar, their achievements differ in each case. In particular, we will use the bifurcation technique, and the major difference in the different cases is in the way to get the a priori bounds. Let us emphasize that in the cooperative case ($b>0$), we have obtained the a priori bounds in two different ways. In case (a) we use mainly arguments based on the principle of maximum, while in case (b) we argue by contradiction and and make use of the fact that the diffusion coefficient grows very fast.

 In all the cases, we prove the existence of an unbounded continuum $\mathcal{C}\subset \Rr\times C_0^1(\overline\Omega)\times C_0^1(\overline\Omega)$ of positive solutions of (P). Specifically, we prove:
 
 \begin{Theo}
 \label{teointrobif}
 Assume $a(0)>0$. From the trivial solution $(u,v)=(0,0)$ emanates an unbounded  continuum $\mathcal{C}\subset \Rr\times C_0^1(\overline\Omega)\times C_0^1(\overline\Omega)$ of positive solutions of (P) at
 $$
 \lambda=a(0)\lambda_1.
 $$
 Moreover, if $b\leq 0$ or $b>0$ or $\rho$ is small or $\sigma$, or $b>0$  and $a$ verifies (2), then
 $$
 (a(0)\lambda_1,\infty)\subset \Proj_\Rr(\mathcal{C})\subset (\lambda_0,\infty),
 $$
for  some $\lambda_0\leq 0$. where $\Proj_\Rr(\lambda,u,v)=\lambda$ for $(\lambda,u,v)\in\mathcal{C}$.
 
 As a consequence, there exists at least a positive solution for $\lambda>a(0)\lambda_1$.
 \end{Theo}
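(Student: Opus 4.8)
The plan is to cast (P) as a bifurcation problem from the trivial solution, with $\lambda$ as parameter, and to combine Rabinowitz's global bifurcation theorem with positivity arguments and the a priori bounds available in each regime. First I would decouple the system: since the second equation is linear in $v$, for each $u$ I set $v = \rho K u$, where $K := (-\Delta + \sigma)^{-1}$ is the inverse of $-\Delta + \sigma$ under homogeneous Dirichlet conditions, a compact and strongly positive operator on $C_0^1(\overline{\Omega})$. Substituting into the first equation and inverting the Laplacian through $G := (-\Delta)^{-1}$, problem (P) becomes equivalent to the scalar fixed point equation
\[ u = \lambda\, T u + \mathcal{N}(\lambda, u), \qquad T := \tfrac{1}{a(0)}\,G, \]
in $X := C_0^1(\overline{\Omega})$, where $\mathcal{N}$ is compact by elliptic regularity and satisfies $\mathcal{N}(\lambda,u) = o(\|u\|_X)$ as $u\to 0$ uniformly for $\lambda$ in bounded intervals, because $\int_\Omega K u \to 0$ and $a$ is continuous with $a(0) > 0$.

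The linearization at $u = 0$ is $u = \lambda T u$, i.e. $-\Delta u = (\lambda/a(0))u$, so the bifurcation values on the trivial branch are exactly $\lambda = a(0)\mu_k$, where $\{\mu_k\}$ are the Dirichlet eigenvalues of $-\Delta$. The first one, $\mu_1 = \lambda_1$, is simple with positive eigenfunction $\varphi_1$, so $a(0)\lambda_1$ is an algebraically simple characteristic value of $T$. The Crandall--Rabinowitz theorem then provides a local branch, and Rabinowitz's global alternative yields a continuum $\mathcal{C}$ emanating from $(a(0)\lambda_1, 0)$ which is either unbounded in $\mathbb{R}\times X$ or meets the trivial line at some $(a(0)\mu_k,0)$ with $k\geq 2$.

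Next I would establish positivity and thereby unboundedness. Near $a(0)\lambda_1$ the bifurcating solutions inherit the sign of $\varphi_1 > 0$, so $u > 0$ and hence $v = \rho K u > 0$; the strong maximum principle and Hopf's lemma prevent any component of a solution along $\mathcal{C}$ from touching zero in $\Omega$ without forcing $u\equiv 0$, so the nontrivial part of $\mathcal{C}$ consists of coexistence states. Since $a(0)\lambda_1$ is the only characteristic value admitting a positive eigenfunction, $\mathcal{C}$ cannot approach $(a(0)\mu_k,0)$ with $k\geq 2$; the second alternative is excluded and $\mathcal{C}$ is unbounded.

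For the projection, $\operatorname{Proj}_{\mathbb{R}}(\mathcal{C})$ is the continuous image of a connected set, hence an interval whose closure contains $a(0)\lambda_1$, while the nonexistence statements give $\operatorname{Proj}_{\mathbb{R}}(\mathcal{C})\subset(\lambda_0,\infty)$. The decisive input is the a priori bound, proved case by case ($b\leq 0$; $b>0$ with $\rho$ small or $\sigma$ large; $b>0$ under (2)): in each regime positive solutions are bounded in $X$ uniformly for $\lambda$ in compact subsets of $(\lambda_0,\infty)$. Granting this, an unbounded continuum with bounded $\lambda$-sections forces $\sup\operatorname{Proj}_{\mathbb{R}}(\mathcal{C}) = \infty$, so $(a(0)\lambda_1,\infty)\subset\operatorname{Proj}_{\mathbb{R}}(\mathcal{C})$ and a coexistence state exists for every $\lambda > a(0)\lambda_1$. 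I expect these a priori bounds to be the main obstacle: for $b\leq 0$ the competitive coupling allows maximum-principle comparisons, whereas for $b>0$ the cooperative term drives solutions upward and one must exploit either smallness of $b$ or $\rho$ or largeness of $\sigma$ to build a supersolution, or the fast-growth condition (2) through a contradiction argument in which a hypothetical blow-up makes the nonlocal diffusion dominate the reaction.
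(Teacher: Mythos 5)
Your argument is correct and reaches the stated conclusion, but the global-bifurcation step is organized differently from the paper's. You first solve the linear $v$-equation, substituting $v=\rho(-\Delta+\sigma)^{-1}u$, and run the Rabinowitz alternative for the resulting scalar nonlocal equation in $u$ alone; the paper instead keeps the system form, writes (P) as $\mathcal{G}(\lambda,u,v)=(\mathrm{Id}-\mbox{compact})+N=0$ on $C^1_0(\overline\Omega)\times C^1_0(\overline\Omega)$ and invokes a unilateral global bifurcation theorem for systems (Theorem 6.4.3 of [2], Theorem 1.1 of [9]), which yields \emph{four} alternatives: unboundedness, return to the trivial line at some $\overline\lambda\neq a(0)\lambda_1$, or convergence to a semi-trivial state $(\overline u,0)$ or $(0,\overline v)$. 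Your reduction buys the automatic exclusion of the two semi-trivial alternatives, since $u>0$ forces $v=\rho(-\Delta+\sigma)^{-1}u>0$; the paper must rule these out by hand (passing to the limit in the $v$-equation for (A$_3$), and the inequality $\sigma\|v_n\|_\infty\leq\rho\|u_n\|_\infty$ for (A$_4$)). The price is that your scalar operator carries the nonlocal term inside both $a$ and the reaction, but the estimate $\mathcal{N}(\lambda,u)=o(\|u\|)$ goes through exactly as you say, and your exclusion of a return to the trivial line at a higher eigenvalue matches the paper's argument via $\lambda_n=\sigma_1[-a(\int_\Omega v_n)\Delta+u_n-bv_n]\to a(0)\lambda_1$. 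The remaining ingredients you defer — the case-by-case a priori bounds, the non-existence thresholds, and the deduction that an unbounded continuum with $\lambda$-wise bounded slices has unbounded projection — are precisely Propositions 3.1 and 3.2 and the end of the proof of Theorem 5.1; one small point is that for $b>0$ under $(H_1)$ the paper does not build a supersolution but simply chains the maximum-point inequalities $u_M\leq\lambda+bv_M$ and $v_M\leq\rho u_M\|e_\sigma\|_\infty$ to close the bound.
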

 
 In Section 4 we also study the local bifurcation, including the bifurcation direction. This direction depends on the relative size of the coefficients of (P) and $a'(0)$. Specifically, if
$$ a'(0)>\dfrac{\Pae b\rho-\lambda_1-\sigma\Pad \|\varphi_1\|_3^3}{\Lu \rho\|\varphi_1\|_1},
$$
then the direction is supercritical, while if
  $$ a'(0)<\dfrac{\Pae b\rho-\lambda_1-\sigma\Pad \|\varphi_1\|_3^3}{\Lu \rho\|\varphi_1\|_1}
  $$
the direction is subcritical, here, $\varphi_1$ is a positive eigenfunction associated to $\lambda_1$. In Figure 1 we have illustrated two possible bifurcation diagrams.

Moreover, we prove an uniqueness result. We show that when $a$ is increasing, there exists a unique coexistence state of (P) for $b\in (-b_0,b_0)$ for some $b_0>0$. Observe that this uniqueness is optimal in the following sense: if $a$ is increasing  and $b$ is large, the bifurcation direction is subcritical, and then the multiplicity of positive occurs for $\lambda\in(\lambda_1 a(0)-\delta, \lambda_1 a(0))$ for some $\delta>0$ and small.   
 
Finally, we analyze the case $a(0)=0$. In this case we can not apply directly the bifurcation method, but we can use a compactness argument to show the existence of positive solution of (P) for all $\lambda>0$.



\begin{figure}[htbp]
    \centering
    \subfigure[Supercritial bifurcation]{\includegraphics[width=0.47\textwidth]{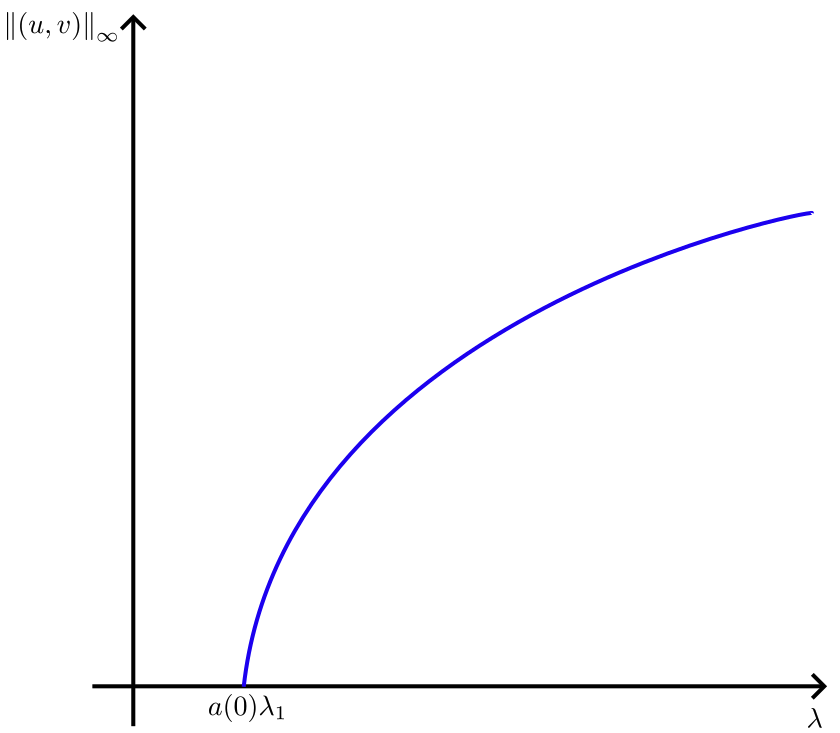}}
\quad
    \subfigure[Subcritial bifurcation]{\includegraphics[width=0.47\textwidth]{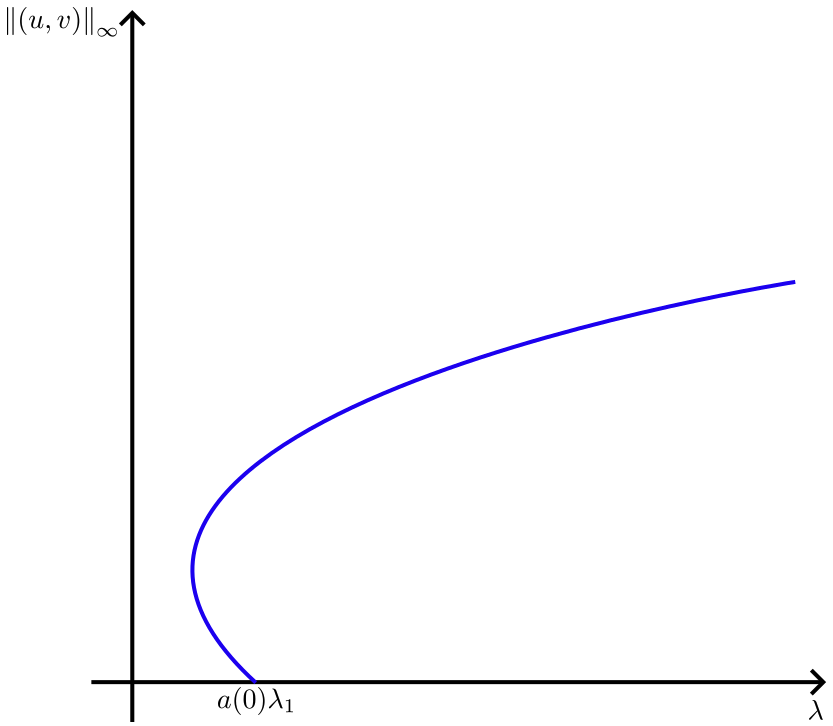}} 
    \caption{Bifurcation diagrams of (P)}
\end{figure}
 
 An outline of the work is as follows: in Sections 2 and 3 we give some preliminaries, introduce some notations and prove some a priori bounds of positive solutions of (P) and non-existence results. Sections 4 and 5 are devoted to local and global bifurcation. The uniqueness results are proved in Section 6. Finally, in Section 7 we study the degenerate case $a(0)=0$. 
\section{Preliminaries and Notations}
 In this section, we will summarize some definitions and notations that will be used throughout the work.

Given $f\in C\FeOmega$, we denote by
$$
    f_M:=\max\limits_{x\in\overline{\Omega}}f(x)\qquad\mbox{and}\qquad         f_L:=\min\limits_{x\in\overline{\Omega}}f(x).
$$

Moreover, for $d>0$ and $b\in L^\infty(\Omega)$, we denote by $\sigma_1[-d\Delta+b]$ the principal eigenvalue of
\begin{equation}
\left\{
\begin{array}{rll}
        -d\Lapla u+b(x)u=&\hspace*{-0.25cm}\lambda u,&\mbox{ in }\Omega			\\
	u=&\hspace*{-0.25cm}0, &\mbox{ on }\partial\Omega
    \end{array}\right.
\end{equation}
It is well known that the map 
$$
(d,b)\in\Rr \times L^\infty(\Omega)\mapsto \sigma_1[-d\Delta+b]\quad\mbox{is continuous and increasing.}
$$
When $d=1$ and $b\equiv 0$ in $\Omega$, we simply write $\lambda_1$ the principal eigenvalue in (3).

Observe that since $\sigma>0$, it follows that	 
\begin{equation}
	\Su\Que -\Lapla+\sigma\Qud =\lambda_1+\sigma>0.
\end{equation}

Along the paper, we denote by $e_\sigma$ the unique positive solution of
    \begin{align}
    \begin{cases}\begin{array}{rll}
				-\Lapla e+\sigma e=&\hspace*{-0.25cm}1,&\mbox{ in }\Omega			\\
				e=&\hspace*{-0.25cm}0, &\mbox{ on }\partial\Omega,
    \end{array}.\end{cases}
    \end{align}
whose existence and uniqueness are guaranteed by (4).

Our main goal in this work will be to investigate the existence of a non-negative solutions $\Pae u,v\Pad$ for the problem (P). In particular, we distinguish three types of solutions:
 \begin{itemize}
     \item Trivial $\Pae 0,0\Pad$.
     \item Semi-trivial $\Pae u,0\Pad$ and $\Pae 0,v\Pad$.
     \item Coexistence state $\Pae u,v\Pad$ when both components are non-trivial and non-negative.
 \end{itemize}
First, we deal with the case $\rho=0$. Observe that $v$ verifies
\begin{equation}
 -\Delta v+\sigma v=0\quad\mbox{in $\Omega$,}\quad v=0\quad\mbox{on $\partial\Omega$.}   
\end{equation}

Hence, by (4) and using  the maximum principle we get that $v\equiv 0$ in $\Omega$. Then, $u$ verifies
\begin{align}
		\begin{cases}\begin{array}{rll}
				-a(0)\Lapla u=&\hspace*{-0.25cm}\lambda u-u^2,&\mbox{ in }\Omega			\\
				u=&\hspace*{-0.25cm}0, &\mbox{ on }\partial\Omega
		\end{array}.\end{cases}
	\end{align}
Therefore, (7) possesses a unique positive solution if, and only if,
$$
\lambda>\sigma_1[-a(0)\Lapla]=a(0)\lambda_1.
$$
In summary, when  $\rho=0$, (P) possesses a positive solution if and only if $\lambda>a(0)\lambda_1$. In such case, the solution is unique.

From now on, we assume throughout the paper that $\rho>0$.
 
 Observe that (P) does not possess semi-trivial solutions. Indeed, if $u=0$ in $\Omega$, then $v$ verifies (6). Hence, $v\equiv 0$ in $\Omega$. 

On the other hand, if $u\equiv 0$ in $\Omega$, from (6), we conclude that $v\equiv 0$ in $\Omega$. 

Finally, observe that if $(u,v)$ is a coexistence state of (P), then both $u$ and $v$ are strictly positive by the strong maximum principle.



	

\section{A Priori Bounds and Non-Existence of Coexistence States}

In this section, we prove results of non-existence and a priori bounds of coexistence states of (P). For this, we consider separately  the cases $b\leq 0$ and $b>0$.

\begin{Prop}
    Assume that $b\leq 0$.
    \begin{itemize}
        \item[(a)] If $\lambda\leq \MinA\Lu$, then (P) does not possess coexistence states.
        \item[(b)] If $(u,v)$ is a coexistence state of $(P)$, then 
        $$
        u\mI\lambda\quad\mbox{and}\quad v\mI\rho\lambda e_\sigma \quad\mbox{in $\Omega$,}    
        $$
        where $e_\sigma$ is defined in (5).
    \end{itemize}
\end{Prop}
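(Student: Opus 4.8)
The plan is to handle parts~(a) and~(b) separately, exploiting the key simplification that for a \emph{fixed} coexistence state $(u,v)$ the nonlocal coefficient $A:=a\Pae\IntO v\Pad$ is merely a positive constant, so the first equation becomes an ordinary semilinear elliptic equation with constant diffusion $A$ and all arguments reduce to local ones. Since $v>0$ in $\Omega$ we have $\IntO v>0$, whence $A\MI\MinA$. For the bound $u\mI\lambda$ in part~(b), I would apply the maximum principle: let $\MaxX\in\Omega$ be an interior point where $u$ attains its maximum $\MaxU$ (interior because $u=0$ on $\partial\Omega$ and $u>0$ inside), so that $-\Delta u(\MaxX)\MI 0$. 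Evaluating the first equation at $\MaxX$ and using $b\mI 0$ and $v(\MaxX)>0$,
$$
0\mI -A\Delta u(\MaxX)=\lambda u(\MaxX)-u(\MaxX)^2+b\,u(\MaxX)v(\MaxX)\mI u(\MaxX)\Pae\lambda-u(\MaxX)\Pad,
$$
and since $u(\MaxX)=\MaxU>0$ this forces $\MaxU\mI\lambda$, i.e. $u\mI\lambda$ in $\Omega$. For the bound on $v$, I would combine this with a comparison for $-\Delta+\sigma$, which obeys the maximum principle because $\Su\Que-\Delta+\sigma\Qud=\lambda_1+\sigma>0$ by (4). From $u\mI\lambda$ and the second equation, $-\Delta v+\sigma v=\rho u\mI\rho\lambda$, while by (5) the function $\rho\lambda e_\sigma$ solves $-\Delta(\rho\lambda e_\sigma)+\sigma(\rho\lambda e_\sigma)=\rho\lambda$; setting $w:=\rho\lambda e_\sigma-v$ yields $-\Delta w+\sigma w=\rho(\lambda-u)\MI 0$ with $w=0$ on $\partial\Omega$, hence $w\MI 0$ and $v\mI\rho\lambda e_\sigma$.

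For the non-existence statement in part~(a), I would argue by contradiction: suppose $(u,v)$ is a coexistence state. The first equation reads as the eigenvalue problem
$$
-A\Delta u+\Pae u-bv\Pad u=\lambda u\quad\mbox{in }\Omega,\qquad u=0\ \mbox{on }\partial\Omega,
$$
for which $u>0$ is a positive eigenfunction; hence $\lambda$ must equal the principal eigenvalue $\Su\Que-A\Delta+(u-bv)\Qud$. Since $b\mI 0$ and $v>0$, the potential satisfies $u-bv\MI u>0$, so by monotonicity of the principal eigenvalue in the potential,
$$
\lambda=\Su\Que-A\Delta+(u-bv)\Qud>\Su\Que-A\Delta\Qud=A\Lu\MI\MinA\Lu,
$$
contradicting $\lambda\mI\MinA\Lu$. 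The strictness is most transparent if instead one multiplies the first equation by $\varphi_1$ and integrates by parts, obtaining $\Pae A\Lu-\lambda\Pad\IntO u\varphi_1=-\IntO u^2\varphi_1+b\IntO uv\varphi_1<0$, which again gives $\lambda>A\Lu\MI\MinA\Lu$ directly.

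I expect the main obstacle to be essentially bookkeeping rather than a genuine difficulty: one must verify that the principal-eigenvalue characterization is legitimately applicable (i.e. that $u,v\in L^\infty(\Omega)$ so the potential $u-bv$ is an admissible $L^\infty$ weight, and that $u$ is genuinely the principal eigenfunction), and one must keep the sign conditions $b\mI 0$ and $u,v>0$ straight so that each comparison and monotonicity step delivers the correct, and in part~(a) strict, inequality. The nonlocality of $a$ plays no real role here once one observes that $A$ is a fixed positive constant along any given solution, which is what makes the purely local maximum-principle machinery suffice.
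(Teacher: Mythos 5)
Your proposal is correct and follows essentially the same route as the paper: for part (a) the paper likewise identifies $\lambda$ as the principal eigenvalue $\Su\Que-a\Pae\IntO v\Pad\Lapla+u-bv\Qud$ and uses monotonicity of the principal eigenvalue together with $a(s)\MI\MinA$ to get $\lambda>\MinA\Lu$, and for part (b) it evaluates the first equation at an interior maximum of $u$ to get $\Um\mI\lambda+bv(\MaxX)\mI\lambda$ and then compares $v$ with $\rho\lambda e_\sigma$ via the second equation. Your extra remarks (the integration against $\varphi_1$ as an alternative for strictness, and the explicit barrier $w=\rho\lambda e_\sigma-v$) are sound but do not change the argument in substance.
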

\begin{proof}
    (a) Suppose that $(u,v)$ is a coexistence state of (P), then
    \begin{align*}
        \lambda=\Su\Que-a\Pae\IntO v\Pad\Lapla+u-bv\Qud.
    \end{align*}
    Since $b\leq 0$ and $a(s)\geq\MinA$, it follows that
    \begin{align*}
        \lambda>\Su\Que-\MinA\Lapla\Qud=\MinA\Lu.
    \end{align*}
    (b) Let $\MaxX\in\Omega$ such that $\MaxU=u\Pae\MaxX\Pad=\max\limits_{x\in\overline\Omega}u(x)$. Then,
    \begin{align*}
        -a\Pae\IntO v\Pad\Lapla u(\MaxX)=\lambda\Um-\Um^2+b\Um v(\MaxX)\MI 0
    \end{align*}
    and, as a consequence, we arrive at
    \begin{equation}
      \Um\leq \lambda+bv(\MaxX).
    \end{equation}
    Since $b\leq 0$, it follows that
    \begin{align*}
        u\mI\lambda\quad\mbox{in $\Omega$.}
    \end{align*}
    Therefore, from the second equation of (P), it follows that
    \begin{align*}
        -\Lapla v+\sigma v=\rho u\mI\rho\lambda
    \end{align*}
    which yields 
    \begin{align*}
        v\mI\rho\lambda e_\sigma,
    \end{align*}
    This completes the proof.
\end{proof}
Now, we study the case $b>0$.
\begin{Prop}
    Suppose that $b>0$.
    \begin{enumerate}
        \item[(a)] Assume that
    $$
     b\rho\|e_\sigma\|_\infty<1.
    \eqno(H_1)
    $$
    Then, 
     $$
         \|u\|_\infty\leq\frac{\lambda}{1- b\rho\|e_\sigma\|_\infty}\quad\mbox{and}\quad 
        \|v\|_\infty \leq\frac{\rho\lambda \|e_\sigma\|_\infty}{1- b\rho\|e_\sigma\|_\infty}.
        $$
    
    \item[(b)] If
    $$
    \lim\limits_{s\to\infty}\dfrac{a(s)}{s}=+\infty,
    \eqno(H_2)
    $$
     and $\lambda\in \Lambda\subset\Rr$, $\Lambda$ a compact set, then there exists $K>0$ such that
        $$
        \|(u,v)\|_\infty\leq K\qquad\mbox{for all $\lambda\in\Lambda$.}
        $$
    Moreover, there exists $\lambda_0\in\Rr$ such that (P) does not possess coexistece states for $\lambda\leq \lambda_0$.
    \end{enumerate}
\end{Prop}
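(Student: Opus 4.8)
The plan is to treat the two parts separately, reusing the maximum-point idea from Proposition 3.1 and supplementing it in part (b) with an eigenvalue inequality and a compactness/regularity estimate that makes the hypothesis $(H_2)$ bite. For part (a), let $(u,v)$ be a coexistence state and let $x\in\Omega$ be an interior maximum point of $u$, so $-\Lapla u(x)\MI 0$ and hence $-a(\IntO v)\Lapla u(x)\MI 0$. Evaluating the first equation of (P) at $x$ and dividing by $u(x)=\|u\|_\infty>0$ gives, since $b>0$, the bound $\|u\|_\infty\mI\lambda+bv(x)\mI\lambda+b\|v\|_\infty$. From the second equation, $-\Lapla v+\sigma v=\rho u\mI\rho\|u\|_\infty$, so comparison with the function $\rho\|u\|_\infty\,e_\sigma$ (using (5) and the maximum principle) yields $\|v\|_\infty\mI\rho\|e_\sigma\|_\infty\|u\|_\infty$. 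Substituting this into the previous inequality produces $\|u\|_\infty\mI\lambda+b\rho\|e_\sigma\|_\infty\|u\|_\infty$; since $(H_1)$ guarantees $1-b\rho\|e_\sigma\|_\infty>0$, we solve for $\|u\|_\infty$ to get the first estimate, and feed it back into the comparison bound to get the second. This part is routine.

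For part (b) I would argue by contradiction: suppose there are $\lambda_n\in\Lambda$ and coexistence states $(u_n,v_n)$ with $\|(u_n,v_n)\|_\infty\to\infty$, and set $M_n=\|u_n\|_\infty$, $I_n=\IntO v_n$, $A_n=a(I_n)$. The two elementary bounds above, which hold for every coexistence state regardless of $(H_1)$, read $M_n\mI\lambda_n+b\|v_n\|_\infty$ and $\|v_n\|_\infty\mI\rho\|e_\sigma\|_\infty M_n$; since $\Lambda$ is compact, $\lambda_n$ is bounded, so $M_n$ and $\|v_n\|_\infty$ are comparable and both diverge. Reading the first equation as an eigenvalue identity, $u_n>0$ is the principal eigenfunction of $-A_n\Lapla+(u_n-bv_n)$ with eigenvalue $\lambda_n$, so by monotonicity of $\sigma_1$ in the potential together with $u_n\MI 0$ and $b>0$,
$$\lambda_n=\sigma_1[-A_n\Lapla+u_n-bv_n]\MI \sigma_1[-A_n\Lapla-b\|v_n\|_\infty]=A_n\Lu-b\|v_n\|_\infty.$$
Combining with the two elementary bounds gives $A_n\Lu\mI\lambda_n+b\|v_n\|_\infty\mI C M_n$, i.e. $a(I_n)\mI C' M_n$ for a constant $C'$ independent of $n$; call this $(\star)$.

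The heart of the argument, and the step I expect to be the main obstacle, is the reverse estimate $I_n\MI c\,M_n$ for some fixed $c>0$, which converts the pointwise information that $v_n$ is large somewhere into the integral information that drives the nonlocal coefficient. I would obtain it by normalizing $\hat v_n=v_n/M_n$: then $-\Lapla\hat v_n+\sigma\hat v_n=\rho u_n/M_n$ with $\|\rho u_n/M_n\|_\infty\mI\rho$, so $L^p$ elliptic regularity and Sobolev embedding give a uniform $C^{1,\alpha}(\overline\Omega)$ bound on $\hat v_n$. The maximum-point inequality at the argmax $x_n$ of $u_n$ gives $\hat v_n(x_n)\MI(1-\lambda_n/M_n)/b\to 1/b$, hence $\|\hat v_n\|_\infty\MI c_0>0$; the uniform Lipschitz bound together with $\hat v_n=0$ on $\FOmega$ forces this near-maximal value to occur at distance $\MI c_0/L$ from the boundary, so $\hat v_n\MI c_0/2$ on a ball of fixed radius inside $\Omega$, and integrating yields $\IntO\hat v_n\MI c>0$, that is $I_n\MI c\,M_n\to\infty$. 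Now $(H_2)$ gives $a(I_n)/I_n\to\infty$, so $a(I_n)\MI K I_n\MI Kc\,M_n$ for large $n$ and arbitrary $K$; choosing $K>C'/c$ contradicts $(\star)$, which proves the uniform bound $\|(u,v)\|_\infty\mI K$ on $\Lambda$.

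Finally, for the non-existence claim I would run the same estimates on an arbitrary coexistence state with $\lambda\mI 0$. Then $\|u\|_\infty\mI b\|v\|_\infty$, so $\hat v=v/\|v\|_\infty$ again satisfies a uniform $C^{1,\alpha}$ bound and the interior-ball argument gives $\IntO v\MI c\|v\|_\infty$ with $c$ independent of the state. Writing $I=\IntO v$ and inserting this into $A\Lu\mI\lambda+b\|v\|_\infty\mI\lambda+(b/c)I$ gives $\lambda\MI a(I)\Lu-(b/c)I$. By $(H_2)$ the right-hand side is $\MI 0$ once $I\MI S$ for suitable $S$, while for $0<I<S$ it is $\MI -(b/c)S$ because $a\MI 0$; hence $\lambda\MI\lambda_0:=-(b/c)S$, so (P) has no coexistence state for $\lambda\mI\lambda_0$. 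The delicate point throughout is the uniform lower bound $\IntO v\MI c\|v\|_\infty$, since without a uniform modulus of continuity for the normalized $v$ one cannot exclude concentration of $v$ near $\FOmega$, which is precisely what the elliptic regularity of the second equation rules out.
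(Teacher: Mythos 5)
Your part (a) reproduces the paper's argument verbatim: the interior-maximum inequality $\|u\|_\infty\leq\lambda+b\|v\|_\infty$, the comparison $v\leq\rho\|u\|_\infty e_\sigma$, and the substitution under $(H_1)$. For part (b) your proof is correct but follows a genuinely different route. The two arguments share the crucial step of bounding $\int_\Omega v_n$ from below by a fixed multiple of $\|u_n\|_\infty$ via elliptic regularity for the second equation: the paper does this by compactness (the normalization $z_n=v_n/\|u_n\|_\infty$ is bounded in $W^{2,p}$, converges in $C^1(\overline\Omega)$ to some $z^*\geq 0$ with $\|z^*\|_\infty\geq 1/b$, hence $\int_\Omega z_n\to\int_\Omega z^*>0$), whereas you make it quantitative with a uniform Lipschitz bound and an interior ball around the near-maximum point, which avoids subsequence extraction and yields an explicit constant $c$. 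Where you truly diverge is in how $(H_2)$ produces the contradiction. The paper divides the $u$-equation by $\|u_n\|_\infty$ and observes that the effective diffusion coefficient $\frac{a(s_n)}{s_n}\int_\Omega z_n$ blows up while the right-hand side stays bounded in $L^p$, forcing $\|w_n\|_{W^{2,p}}\to 0$ and contradicting $\|w_n\|_\infty=1$. You instead read the first equation as the principal-eigenvalue identity $\lambda_n=\sigma_1[-a(I_n)\Delta+u_n-bv_n]$ and use monotonicity of $\sigma_1$ in the potential to obtain $a(I_n)\lambda_1\leq\lambda_n+b\|v_n\|_\infty\leq C\|u_n\|_\infty$, which is incompatible with $a(I_n)\geq K I_n\geq Kc\|u_n\|_\infty$ for arbitrarily large $K$; this is more elementary (no $W^{2,p}$ estimate on the $u$-equation is needed) and is consistent with the eigenvalue computations the paper itself uses in Proposition 3.1. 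The same device streamlines the non-existence claim: your inequality $\lambda\geq a(I)\lambda_1-(b/c)I\geq-(b/c)S$ gives an explicit $\lambda_0$ directly, whereas the paper reruns the whole blow-up argument for $\lambda_n\to-\infty$ and needs the auxiliary bound (17) to keep $\lambda_n/\|u_n\|_\infty$ under control. The only cosmetic point: your conclusion rules out coexistence for $\lambda<\lambda_0$ while the statement asserts it for $\lambda\leq\lambda_0$, so simply replace your $\lambda_0$ by any strictly smaller number.
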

\begin{proof}
(a)
    Using  the second equation of (P), it follows that
    $$
    -\Lapla v+\sigma v=\rho u\leq \rho \Um,
    $$
which yields 
    $$
    v\leq \rho\Um e_\sigma.
    $$
Hence, 
\begin{equation}
    \Vm\leq \rho\Um \|e_\sigma\|_\infty.
\end{equation} 
Using (8), since $b>0$ we get that 
$$
\Um\leq \lambda +bv_M.
$$
By (9), we conclude the first paragraph.

    \noindent (b) If we denote $\MaxY\in\Omega$ such that $v(\MaxY)=\MaxV=\max\limits_{x\in\overline\Omega}v(x)$, we get
    \begin{align*}
        -\Lapla v(\MaxY)+\sigma\Vm=\rho u(\MaxY)\mI\rho\Um,
    \end{align*}
and then
\begin{equation}
\sigma\Vm\mI\rho\Um.
\end{equation}
Consequently, using (8)
    \begin{align}
        \dfrac{\sigma}{\rho}\mI\dfrac{\Um}{\Vm}\mI\dfrac{\lambda}{\Vm}+b.
    \end{align}
    Then, combining (10) and (8) we deduce that
    \begin{align}
        \dfrac{1}{b}\Pae 1-\dfrac{\lambda}{\Um}\Pad\mI\dfrac{\Vm}{\Um}\mI\dfrac{\rho}{\sigma}.
    \end{align}

We argue by contradiction. Suppose that there exist $\Pae\lambda_n\Pad\subset\Rr$  and $\Pae u_n,v_n\Pad\in\Cuz\FeOmega\times\Cuz\FeOmega$, positive solutions of (P) such that such $\lambda_n\to\overline{\lambda}<+\infty$ and $\Noe u_n\NodI+\Noe v_n\NodI\to+\infty$. By (11), if $\Noe u_n\NodI\to+\infty$ then $\Noe v_n\NodI\to+\infty$ and, by (12), if $\Noe v_n\NodI\to+\infty$ then $\Noe u_n\NodI\to+\infty$. 
   Hence, we have that
\begin{equation}
\label{ambos}
\Noe u_n\NodI\to+\infty\quad\mbox{and}\quad \Noe v_n\NodI\to+\infty.
\end{equation}

Consider
$$ 
w_n:=\dfrac{u_n}{\Noe u_n\NodI}\quad\mbox{and}\quad
z_n:=\dfrac{v_n}{\Noe u_n\NodI}.
$$
Note that 
    \begin{equation}
        \Noe w_n\NodI=1\quad \mbox{and}\quad \Noe z_n\Nod_\infty \geq \dfrac{1}{b}\Pae 1-\dfrac{\lambda_n}{\|u_n\|_\infty}\Pad.
    \end{equation}
Using these expressions in the second equation of (P), we have
    \begin{align*}
    \begin{cases}\begin{array}{rll}
				-\Lapla z_n+\sigma z_n=&\hspace*{-0.25cm}\rho w_n,&\mbox{ in }\Omega			\\
				z_n=&\hspace*{-0.25cm}0, &\mbox{ on }\partial\Omega.
    \end{array}
    \end{cases}
    \end{align*}
    Then, by Agmon-Douglis-Nirenberg Theorem (see [6]),
    \begin{align*}
        \Noe z_n\Nod_{W^{2.p}}\mI C\Noe w_n\Nod_p\mI C,\qquad p>1.
    \end{align*}
    Hence, there exists $z^*\in C^1(\overline\Omega)$, $z^*\geq 0$ in $\Omega$ such that
    $$
    z_n\to z^*\quad\mbox{in $C^{1}\FeOmega$.}
    $$
    Observe that from (14), $z^*\neq 0$ in $\Omega$.

    Dividing the equation of $u_n$ by $\|u_n\|_\infty^2$, we get
    \begin{align}
        -\dfrac{a\Pae\IntO v_n\Pad}{\|u_n\|_\infty}\Lapla w_n=\dfrac{\lambda_n}{\|u_n\|_\infty}w_n-w_n^2+bw_nz_n\quad\mbox{in $\Omega$,}\quad w_n=0\quad \mbox{on $\partial\Omega$.}
    \end{align}
    On the left side of (15), we have
    $$
    \dfrac{a\Pae\IntO v_n\Pad}{\|u_n\|_\infty}
        =\dfrac{a\Pae\|u_n\|_\infty\IntO\dfrac{v_n}{\|u_n\|_\infty}\Pad}{\|u_n\|_\infty}\\
        =\dfrac{a\Pae\|u_n\|_\infty\IntO z_n\Pad}{\|u_n\|_\infty}\\
        =\dfrac{a\Pae s_n\Pad}{s_n}\IntO z_n,
    $$
    where 
    $$
    s_n=\|u_n\|_\infty\IntO z_n.
    $$ 
    Since $z^*\gneq 0$, we deduce that $s_n\to+\infty$. Thus
    \begin{align*}
        -\dfrac{a\Pae s_n\Pad}{s_n}\IntO z_n\ \Lapla w_n=\dfrac{\lambda_n}{\|u_n\|_\infty}w_n-w_n^2+bw_nz_n
    \end{align*}
    and, since the right hand side of the \label{AAA11} equation is bounded by $L^p\Pae\Omega\Pad$ it follows from the expression above 
    \begin{equation}
   \Noe w_n\Nod_{W^{2,p}}\mI\dfrac{C\Noe\dfrac{\lambda_n}{\|u_n\|_\infty}w_n-w_n^2+bw_nz_n\Nod_p}{\dfrac{a\Pae s_n\Pad}{s_n}\IntO z_n}.
     \end{equation}
By $(H_2)$, we conclude by (16) that  $\Noe w_n\Nod_{W^{2,p}}\to 0$, and then $w_n\to 0$ in $C^1\FeOmega$. A contradiction with $\|w\|_\infty=1$.

    Next, we prove that (P) does not have positive solutions for $\lambda\leq \lambda_0$. We argue by contradiction. Suppose that there exists a sequence of coexistence states $(\lambda_n,u_n,v_n)$ of (P) and $\lambda_n\to-\infty$. Observe that
    since 
    $$
    \lambda_n=\sigma_1\left[-a\left(\int_\Omega v_n\right)\Delta+u_n-bv_n\right]
    $$
    then $\|v_n\|_\infty\to \infty$. Indeed, assume that $\|v_n\|_\infty\leq C$, then
    $$
    \lambda_n\geq\MinA\lambda_1-bC,
    $$
    a contradiction. 

    Since $\|v_n\|_\infty\to\infty$, then we obtain that $\|u_n\|_\infty\to \infty$. 

    Observe that by (9), we get
    $$
    1\leq\frac{\lambda_n}{\|u_n\|_\infty}+b\frac{\|v_n\|_\infty}{\|u_n\|_\infty}\leq \frac{\lambda_n}{\|u_n\|_\infty}+b\frac{\rho}{\sigma},
    $$
    and then
    \begin{equation}
    1-b\frac{\rho}{\sigma}\leq \frac{\lambda_n}{\|u_n\|_\infty}\leq 0.
    \end{equation}
    Now, we can argue as before to conclude that
     \begin{align*}
        \Noe w_n\Nod_{W^{2,p}}\mI\dfrac{C\Noe\dfrac{\lambda_n}{\|u_n\|_\infty}w_n-w_n^2+bw_nz_n\Nod_p}{\dfrac{a\Pae s_n\Pad}{s_n}\IntO z_n}.
    \end{align*}
    By (17), we can deduce that
    $$
    \Noe w_n\Nod_{W^{2,p}}\to 0,
    $$
    and we complete the proof.
\end{proof}
\begin{remark}
Observe that the map $\sigma \in [0,+\infty)\mapsto e_\sigma\in C(\overline\Omega)$ is decreasing and
$$
e_\sigma\to 0\quad \mbox{uniformly in $\overline\Omega$ as $\sigma\to\infty$.}
$$
Hence, $(H_1)$ holds if $b$ or $\rho$ is small, or $\sigma$ large. 
\end{remark}

\section{Local Bifurcation Analysis}
In this section, we will re-write (P) so that we can apply local bifurcation results to obtain a curve of non-trivial solutions in a neighborhood of the trivial solution, which is possible by the Crandall-Rabinowitz Theorem, see [3].\\

We will assume that $a\in C^2(\Rr)$ and consider $\lambda$ as the main bifurcation parameter. Thus, we define the spaces
\begin{equation}
    E:=C_0^2\FeOmega\times C_0^2\FeOmega,\quad F:=  C\FeOmega\times C\FeOmega
\end{equation}
    and the operator $\Ff:\Rr\times E\to F,$ by
    \begin{align*}
	\mathcal{F}(\lambda,u,v)=\begin{bmatrix}	
		-a\Pae\IntO v\Pad\Lapla u-\lambda u+u^2-bvu		\\
		-\Lapla v+\sigma v-\rho u					
	\end{bmatrix}.
    \end{align*}
    
It is clear that the operator $ \mathcal{F}\in C^2\Pae \Rr\times E, F \Pad.$  Moreover, $(u,v)\in E$ is a non-negative strong solution of (P) if, and only if,
    \begin{align}
	\Ff(\lambda,u,v)=0,
    \end{align}
    for $ \lambda\in\Rr $. Furthermore
    \begin{align}
	\Ff(\lambda,0,0)=0,\quad\mbox{for all $ \lambda\in\Rr $.}
    \end{align}
Its derivative at point $\Pae\lambda,0,0\Pad$ is given by
    \begin{align*}
	\mathcal{F}_{(u,v)}(\lambda,0,0)(\xi,\eta)^t=\begin{bmatrix}	
		-a\Pae 0\Pad\Lapla\xi-\lambda\xi\\
		-\Lapla\eta+\sigma\eta-\rho\xi
	\end{bmatrix}.
    \end{align*}

    The next result ensures the existence and uniqueness of a curve of non-trivial solutions from point $(0,0)$ and an specific bifurcation point.

\begin{Theo}
Assume that $a(0)>0$. Define
  $$\lambda^*:=a(0)\lambda_1,
  $$
and $ Z $ the topological complement of $ \Ker\Que\mathcal{F}_{(u,v)}(\Le,0,0)\Qud $ on $E$, that is:
	\begin{align*}
		E=\Ker\Que\mathcal{F}_{(u,v)}(\Le,0,0)\Qud\oplus Z.
	\end{align*}

	Then, $ \Le $ is a bifurcation point of $ \Ff(\lambda, u,v)=0 $ and the set of non-trivial solutions of $ \mathcal{F}=0 $ in a neighborhood of $(\Le,0,0) $ is a unique cartesian curve of class $ C^1 $ with parametric representation on $ Z $, that is:  there are $ \varepsilon>0 $, $ \rho>0 $ and applications of class $ C^1 $
    	\begin{align*}
		\lambda:&(-\varepsilon,\varepsilon)\Vai\Rr && \varphi:(-\varepsilon,\varepsilon)\Vai Z && \psi:(-\varepsilon,\varepsilon)\Vai Z\\
		&\hspace{0.38cm}s\hspace{0.5cm}\Chega\lambda(s)     			 && \hspace{1.1cm}s\hspace{0.3cm}\Chega\varphi(s)
            && \hspace{1.2cm}s\hspace{0.3cm}\Chega\psi(s)
	\end{align*}
		with $ \lambda(0)=\Le $, $ \varphi(0)=\psi(0)=0 $ and
	\begin{itemize}
		\item[(a)] (Existence of non-trivial solutions) The family $ \lambda=\lambda(s) $, $ u=s(\VPu+\varphi(s)) $ and \break $ v=s(\Pu+\psi(s)) $ is a solution curve (nontrivial if $ s\neq 0 $) for the equation (19) that bifurcates from $ (\Le,0,0) $.
		\item[(b)] (Uniqueness) If $ (\lambda,u,v)\in B_\rho(\Le,0,0) $, a ball in $\Rr\times E$ centered in  $(\Le,0,0)$ and radius $\rho$, is any solution for (20) then there exist $ 0<\Moe s\Mod<\varepsilon $ such that
	\begin{align*}
		(\lambda,u,v)=(\lambda(s),s(\VPu+\varphi(s)),s(\Pu+\psi(s))).
	\end{align*}
	\end{itemize}
		Moreover, if $ a\in C^k(\Rr)$ then the maps $ \lambda(s),u(s),v(s)\in C^{k-1}(-\varepsilon,\varepsilon) $.
	\end{Theo}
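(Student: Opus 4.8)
The plan is to apply the Crandall--Rabinowitz bifurcation-from-a-simple-eigenvalue theorem [3] to $\Ff$ at the point $(\Le,0,0)$, so that the whole argument reduces to verifying its three structural hypotheses for the linearization $L_0:=\Ff_{(u,v)}(\Le,0,0)$. Recall from (20) that $\Ff(\lambda,0,0)=0$ for every $\lambda$, and that $\Ff\in C^2(\Rr\times E,F)$, so the abstract framework applies. First I would compute $\Ker(L_0)$. The equation $L_0(\xi,\eta)^t=0$ reads
\begin{align*}
-a(0)\Lapla\xi-\Le\,\xi=0,\qquad -\Lapla\eta+\sigma\eta-\rho\xi=0.
\end{align*}
Dividing the first equation by $a(0)>0$ and using $\Le=a(0)\Lu$ gives $-\Lapla\xi=\Lu\xi$, so by simplicity of the principal eigenvalue $\Lu$ we obtain $\xi=c\,\VPu$ for some $c\in\Rr$. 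Since $\Su\Que-\Lapla+\sigma\Qud=\Lu+\sigma>0$ by (4), the operator $-\Lapla+\sigma$ is invertible, and the second equation then determines $\eta$ uniquely as $\eta=c\,\Pu$, where $\Pu$ is the solution of $-\Lapla\Pu+\sigma\Pu=\rho\VPu$. Hence $\Ker(L_0)=\Span\{(\VPu,\Pu)\}$ is one-dimensional.

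Next I would establish that $L_0$ is Fredholm of index zero with $\Cod\,\Img(L_0)=1$. Because $-\Lapla+\sigma$ is an isomorphism, once $\xi$ is known the second component of $L_0(\xi,\eta)^t=(f_1,f_2)^t$ can always be solved, so the range condition is governed entirely by the solvability of $(-\Lapla-\Lu)\xi=f_1/a(0)$. By the Fredholm alternative for the self-adjoint operator $-\Lapla$ in $L^2(\Omega)$, this is solvable precisely when $f_1$ is $L^2$-orthogonal to $\VPu$, which gives
\begin{align*}
\Img(L_0)=\Big\{(f_1,f_2)\in F:\ \IntO f_1\VPu=0\Big\},
\end{align*}
a closed subspace of codimension one. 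The passage between the $C^2_0/C$ setting of (18) and the $L^2$ orthogonality is justified by elliptic regularity, recasting the problem through the compact resolvent $\ILapla$.

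The final hypothesis is the transversality condition $\Ff_{\lambda,(u,v)}(\Le,0,0)(\VPu,\Pu)^t\notin\Img(L_0)$. Differentiating $\Ff_{(u,v)}(\lambda,0,0)$ in $\lambda$ annihilates the second component and yields
\begin{align*}
\Ff_{\lambda,(u,v)}(\Le,0,0)(\VPu,\Pu)^t=\begin{bmatrix}-\VPu\\ 0\end{bmatrix}.
\end{align*}
Testing the first component against $\VPu$ produces $-\IntO\VPu^2=-\|\VPu\|_{L^2}^2<0\neq0$, so this vector violates the range condition and transversality holds.

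With the three hypotheses verified, Crandall--Rabinowitz gives at once that $\Le$ is a bifurcation point and that in a neighbourhood of $(\Le,0,0)$ the nontrivial zeros of $\Ff$ form a unique $C^1$ cartesian curve parametrized over the complement $Z$, in the form $\lambda=\lambda(s)$, $u=s(\VPu+\varphi(s))$, $v=s(\Pu+\psi(s))$ with $\varphi(0)=\psi(0)=0$, which is exactly conclusions (a) and (b). The regularity statement follows by tracking the smoothness of $\Ff$: if $a\in C^k(\Rr)$ then the nonlocal map $(u,v)\mapsto a\Pae\IntO v\Pad\Lapla u$ is $C^k$, hence $\Ff\in C^k(\Rr\times E,F)$, and the implicit-function step inside the theorem produces maps $\lambda(s),\varphi(s),\psi(s)$ of class $C^{k-1}$. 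I expect the only genuinely delicate point to be the functional-analytic bookkeeping of the Fredholm property in the spaces (18)---in particular reconciling the continuous/H\"older framework needed for Schauder estimates with the $L^2$ orthogonality used to pin down $\Img(L_0)$---while the kernel computation and the transversality inequality are short and explicit.
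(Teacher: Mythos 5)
Your proposal is correct and follows essentially the same route as the paper: both verify the three Crandall--Rabinowitz hypotheses for $\mathcal{L}(\Le)=\Ff_{(u,v)}(\Le,0,0)$, computing $\Ker[\mathcal{L}(\Le)]=\Span\{(\VPu,\Pu)\}$ with $\Pu=\frac{\rho}{\Lu+\sigma}\VPu$, identifying the range via the orthogonality condition $\IntO f_1\VPu=0$ in the first component together with surjectivity of $-\Lapla+\sigma$ in the second, and checking transversality from $\Ff_{\lambda(u,v)}(\Le,0,0)(\VPu,\Pu)^t=(-\VPu,0)^t$. The kernel, range, and transversality computations all match the paper's, so no further comparison is needed.
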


\begin{proof}
First, we introduce the notation
$$
\mathcal{L}(\lambda):=\mathcal{F}_{(u,v)}(\lambda,0,0).
$$
We will apply Theorem 1.7 of [3]. For that, consider $\lambda$ as the main bifurcation parameter and  we have to show that $ \mathcal{F} $ verifies:
	\begin{itemize}
		\item[(i)] $ \Ker\Que \mathcal{L}(\lambda^*)\Qud=\Span\{(\VPu,\Pu) \} $,
		\item[(ii)] $\Cod\Que\Img\Pae\mathcal{L}(\lambda^*)\Pad\Qud=1$,
		\item[(iii)] $\mathcal{F}_{\lambda(u,v)}(\Le,0,0)(\VPu,\Pu)^t\notin\Img\Que\mathcal{L}(\lambda^*)\Qud$,
	\end{itemize}
 where $\VPu$  and $\Pu$ are positive functions, which will be detailed below.
    
\noindent (i) We will determine $ \Le $ such that 
$$ \Dim\Que\Ker\Pae \mathcal{L}(\lambda^*)\Pad\Qud=1.
$$
Note that, $ \Pae\xi,\eta\Pad\in\Ker\Que \mathcal{L}(\lambda^*)\Qud$ if, and only if, $ \Pae\xi,\eta\Pad $ is solution for the problem
    \begin{align*}
		\begin{cases}\begin{array}{rll}
				-a(0)\Lapla\xi-\lambda^*\xi=&\hspace*{-0.25cm}0 &\mbox{ in }\Omega,			\\
				-\Lapla\eta+\sigma\eta-\rho\xi=&\hspace*{-0.25cm}0 &\mbox{ in }\Omega,			\\
				\xi=\eta=&\hspace*{-0.25cm}0 &\mbox{ on }\partial\Omega.
		\end{array}
  \end{cases}
	\end{align*}
For the first equation
we deduce that $ \Le:=a(0)\Lu$. Thus, $ \xi\in\Span\{\VPu\} $, where $ \VPu $ is the principal eigenfunction associated to $ \Lu $. For the linear equation 
    \begin{align}\label{EPU}
		\begin{cases}\begin{array}{rll}	
				-\Lapla\eta+\sigma\eta=&\hspace*{-0.25cm}\rho\varphi_1 &\mbox{ in }\Omega,			\\
				\eta=&\hspace*{-0.25cm}0 &\mbox{ on }\partial\Omega,
		\end{array}\end{cases}
	\end{align}
there exists  the unique solution denoted by $\psi_1$. In fact, observe that
\begin{equation}
   \psi_1=K\varphi_1,\qquad K=\frac{\rho}{\lambda_1+\sigma}. 
\end{equation}
Then
$$
	\Ker\Que \mathcal{L}(\lambda^*)\Qud=\Span\{\Pae\VPu,\psi_1\Pad\}.
$$

\noindent (ii) We claim that
$$\Img(\mathcal{L}(\lambda^*))=\Img(-\Delta-\lambda_1 I)\times C_0^1(\overline\Omega).
$$
Given $ (\varphi,\psi)\in\Img(\mathcal{L}(\lambda^*))$, there exists $ (u,v)\in E$ such that
    \begin{align}
		\begin{cases}\begin{array}{rll}
				-a\Pae 0\Pad\Lapla u -a(0)\Lu u=&\hspace*{-0.25cm}\varphi &\mbox{ in }\Omega,			\\
				-\Lapla v+\sigma v-\rho u=&\hspace*{-0.25cm}\psi &\mbox{ in }\Omega,			\\
				u=v=&\hspace*{-0.25cm}0 &\mbox{ on }\partial\Omega.
		\end{array}\end{cases}
	\end{align}
Hence, $\varphi\in \Img(-\Delta-\lambda_1 I)$, that is $\varphi$ verifies
\begin{equation}
\int_\Omega \varphi\varphi_1=0.
\end{equation}
Since $\sigma_1[-\Delta+\sigma]>0$, it is obvious that for any $\psi\in C_0^1(\overline\Omega)$, there exists $v\in C_0^2(\overline\Omega)$ solution of the second equation of (23).

This proves the claim and then
\begin{align}\label{IdP1}
	\Cod\Que\Img(\mathcal{L}(\lambda^*))\Qud=1.
\end{align}

\noindent (iii) Note that
\begin{align*}
    \mathcal{F}_{\lambda(u,v)}(a(0)\Lu,0,0)(\VPu,\Pu)^t	
    =\begin{bmatrix}	
		-\Id & 0 \\
		0 & 0
	\end{bmatrix}\begin{bmatrix}	
		\VPu\\
		\Pu
	\end{bmatrix}=\begin{bmatrix}	
		-\VPu\\
		0
	\end{bmatrix}
\end{align*}
Thus $\mathcal{F}_{\lambda(u,v)}(\Le,0,0)(\VPu,\Pu)^t\in\Img\Que \mathcal{L}(\lambda^*)\Qud$ implies, see (24), that
$$
\int_\Omega \varphi_1^2=0,
$$
a contradiction. 
Therefore, from Theorem 1.7 of [3] we conclude that 
there exist $ \varepsilon>0 $ and applications of class $ C^1 $
\begin{align}
	\nonumber\lambda:&(-\varepsilon,\varepsilon)\Vai\Rr \\
	&\hspace{0.5cm}s\hspace{0.5cm}\Chega\lambda(s)=a(0)\Lu+\rho(s)	
\end{align}
and
\begin{align}
	\nonumber\Psi:&(-\varepsilon,\varepsilon)\Vai Z^2 \\
	&\hspace{0.5cm}s\hspace{0.5cm}\Chega\Psi(s)=(u(s),v(s))=\Pae s(\VPu+\varphi(s)),s(\Pu+\psi(s))\Pad , 
\end{align}
with $ \rho(0)=0 $ and $ \varphi(0)=\psi(0)=0 $. Furthermore, these are the only non-trivial solutions of (P) in a neighborhood of $ (a(0)\Lu,0,0) $.
\end{proof}

    In the next result\textcolor{green}{,} we study the direction bifurcation from the trivial solution.

\begin{Theo}
Assume that $a(0)>0$. Then, the bifurcation direction from the trivial solution $(u,v)=(0,0)$ at $\lambda=\lambda_1 a(0)$ is:
	\begin{itemize}
		\item[(a)] Supercritical, when 
  $$ a'(0)>\dfrac{\Pae b\rho-\lambda_1-\sigma\Pad\IntO\VPu^3}{\Lu \rho\IntO\VPu}. 
  $$
		\item[(b)] Subcritical, when 
  $$ a'(0)<\dfrac{\Pae b\rho-\lambda_1-\sigma\Pad\IntO\VPu^3}{\Lu \rho\IntO\VPu}. 
  $$ 
	\end{itemize}
\end{Theo}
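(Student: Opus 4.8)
The plan is to read off the bifurcation direction from the sign of $\lambda'(0)$ along the solution curve constructed in the previous theorem: the branch is supercritical exactly when $\lambda'(0)>0$ and subcritical when $\lambda'(0)<0$. Recall that on that curve $\lambda(s)=\lambda^*+\rho(s)$, $u(s)=s(\varphi_1+\varphi(s))$ and $v(s)=s(\psi_1+\psi(s))$, with $\rho(0)=\varphi(0)=\psi(0)=0$, so $\lambda'(0)=\rho'(0)$, and it suffices to compute $\rho'(0)$. I would do this by substituting the ansatz into (P) and expanding in powers of $s$, writing $\varphi(s)=s\varphi_2+o(s)$, $\psi(s)=s\psi_2+o(s)$, $\rho(s)=s\rho_1+o(s)$ with $\rho_1=\rho'(0)$.

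The one genuinely nonlocal point in the expansion is the coefficient $a(\int_\Omega v)$: since $\int_\Omega v=s\int_\Omega\psi_1+O(s^2)$, one gets $a(\int_\Omega v)=a(0)+a'(0)\,s\int_\Omega\psi_1+O(s^2)$, and this first-order correction multiplied by $\Delta u=s\Delta\varphi_1+O(s^2)$ is precisely what injects $a'(0)$ into the second-order balance. Collecting powers of $s$ in the first equation, the $O(s)$ terms reproduce the eigenvalue relation $-a(0)\Delta\varphi_1=\lambda^*\varphi_1$ (that is $\lambda^*=a(0)\lambda_1$), the second equation at $O(s)$ merely reconfirms $\psi_1=K\varphi_1$ with $K=\rho/(\lambda_1+\sigma)$, and the $O(s^2)$ terms of the first equation yield
$$
a(0)\,(-\Delta-\lambda_1)\varphi_2 = a'(0)\left(\int_\Omega\psi_1\right)\Delta\varphi_1 + \rho_1\varphi_1 - \varphi_1^2 + b\,\varphi_1\psi_1 .
$$

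The decisive step is then the Fredholm (Lyapunov--Schmidt) solvability condition. Because $-\Delta-\lambda_1$ is self-adjoint with kernel spanned by $\varphi_1$, testing the above identity against $\varphi_1$ kills the left-hand side (so $\varphi_2$ never needs to be computed) and leaves a scalar relation for $\rho_1$; using $\Delta\varphi_1=-\lambda_1\varphi_1$, $\int_\Omega\psi_1=K\int_\Omega\varphi_1$ and $\int_\Omega\psi_1\varphi_1^2=K\int_\Omega\varphi_1^3$, it becomes
$$
\rho_1\int_\Omega\varphi_1^2 = a'(0)\,\lambda_1 K\left(\int_\Omega\varphi_1\right)\int_\Omega\varphi_1^2 + (1-bK)\int_\Omega\varphi_1^3, \qquad K=\frac{\rho}{\lambda_1+\sigma}.
$$
Dividing by the positive quantity $\lambda_1 K\left(\int_\Omega\varphi_1\right)\int_\Omega\varphi_1^2$ and simplifying $(bK-1)/(\lambda_1 K)=(b\rho-\lambda_1-\sigma)/(\lambda_1\rho)$ shows that $\rho_1>0$ is equivalent to $a'(0)>\frac{(b\rho-\lambda_1-\sigma)\int_\Omega\varphi_1^3}{\lambda_1\rho\,(\int_\Omega\varphi_1)\int_\Omega\varphi_1^2}$, and the reverse inequality gives subcriticality. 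The main obstacle is the careful bookkeeping of the nonlocal term, i.e. correctly carrying the weight $a'(0)\int_\Omega\psi_1$ through the second-order balance; a secondary point is normalization, since the stated threshold corresponds to $\int_\Omega\varphi_1^2=1$, so one must either fix $\|\varphi_1\|_{L^2}=1$ or keep the factor $\int_\Omega\varphi_1^2$ explicitly.
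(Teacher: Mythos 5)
Your proposal is correct and follows essentially the same route as the paper: substitute the Crandall--Rabinowitz curve into the first equation, expand to second order in $s$ (with the nonlocal coefficient contributing $a'(0)\,s\int_\Omega\psi_1\,\Delta\varphi_1$), test against $\varphi_1$ to eliminate the unknown second-order corrector, and use $\psi_1=K\varphi_1$ with $K=\rho/(\lambda_1+\sigma)$ to reduce $(bK-1)/(\lambda_1K)$ to $(b\rho-\lambda_1-\sigma)/(\lambda_1\rho)$. Your closing remark about normalization is apt: the paper's final simplification implicitly takes $\int_\Omega\varphi_1^2=1$, exactly as you observe.
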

\begin{proof}
	Using the equations (26) and (27) in the first equation of (P), it follows that
	\begin{align*}
		-a\Pae\IntO s\Pae\Pu+\psi(s)\Pad\Pad\Lapla\Pae s\Pae\VPu+\varphi(s)\Pad\Pad
		=&\Pae\Lu a(0)+\rho(s)\Pad\Pae s\Pae\VPu+\varphi(s)\Pad\Pad \\
		&-\Pae s\Pae\VPu+\varphi(s)\Pad\Pad^2 \\
		&+b\Pae s\Pae\VPu+\varphi(s)\Pad\Pad\Pae s\Pae\Pu+\psi(s)\Pad\Pad.
	\end{align*}
	Consider the Taylor series of $a$ centered at $0$, that is $a(s)=a\Pae 0\Pad+s a'\Pae 0\Pad+O(s)$, and \break $ \rho(s)=s\rho_1(s)+O(s) $. Note that
	\begin{align*}
		-\Pae a(0)+sa'(0)\IntO\Pae\Pu+\psi(s)\Pad+O(s)\Pad\Lapla\Pae\VPu+\varphi(s)\Pad s=\\		
        &\hspace{-3.6cm}=\Pae\Lu a(0)+s\rho_1(s)+O(s)\Pad\Pae\VPu+\varphi(s)\Pad s-\Pae\VPu+\varphi(s)\Pad^2 s^2\\
		&\hspace{-3.3cm}+b\Pae\VPu+\varphi(s)\Pad\Pae\Pu+\psi(s)\Pad s^2
	\end{align*}
	and, consequently,
	\begin{align*}
		-a(0)\Lapla\Pae\VPu+\varphi(s)\Pad s-s^2a'(0)\Lapla\Pae\VPu+\varphi(s)\Pad\IntO\Pae\Pu+\psi(s)\Pad-O(s)\Lapla\Pae\VPu+\varphi(s)\Pad s=\\	
        &\hspace{-11.5cm}=\Lu a(0)\Pae\VPu+\varphi(s)\Pad s+\rho_1(s)\Pae\VPu+\varphi(s)\Pad s^2+O(s)\Pae\VPu+\varphi(s)\Pad s-s^2\Pae\VPu+\varphi(s) \Pad^2 \\
		&\hspace{-11.2cm}+b\VPu\Pae\Pu+\psi(s)\Pad s^2+b\varphi(s)\Pae\Pu+\psi(s)\Pad s^2\textcolor{green}{.}
	\end{align*}
	Taking the terms of first order in $s$, it follows that 
	\begin{align*}
		-a(0)\Lapla\Pae\VPu+\varphi(s)\Pad-O(s)\Lapla\Pae\VPu+\varphi(s)\Pad
        =\Lu a(0)\Pae\VPu+\varphi(s)\Pad+O(s)\Pae\VPu+\varphi(s)\Pad
	\end{align*}
	and, consequently,
	\begin{align*}
		-a(0)\Lapla\VPu=\Lu a(0)\VPu\textcolor{green}{.}
	\end{align*}
	Let us now consider the  terms of second order in $s^2$, it follows that 	
	\begin{align*}
		-a'(0)\Lapla\Pae\VPu+\varphi(s)\Pad\IntO\Pae\Pu+\psi(s)\Pad
		=&\rho_1(s)\Pae\VPu+\varphi(s)\Pad-\Pae\VPu+\varphi(s) \Pad^2+b\VPu\Pae\Pu+\psi(s)\Pad\\
		&+b\varphi(s)\Pae\Pu+\psi(s)\Pad \textcolor{green}{.}
	\end{align*}
	Multiplying by $\VPu$ an integrating in $ \Omega $, it follows that
	\begin{align*}
		-a'(0)\IntO\Pae\Pu+\psi(s)\Pad\IntO\VPu\Lapla\Pae\VPu+\varphi(s)\Pad
		=&\rho_1(s)\IntO\VPu\Pae\VPu+\varphi(s)\Pad-\IntO\VPu\Pae\VPu+\varphi(s) \Pad^2\\
		&+b\IntO\VPu^2\Pae\Pu+\psi(s)\Pad+b\IntO\VPu\varphi(s)\Pae\Pu+\psi(s)\Pad
	\end{align*}
	and, by the  Green's Identity and the definition of eigenvalue, 
	\begin{align*}
		a'(0)\Lu\IntO\Pae\Pu+\psi(s)\Pad\IntO\VPu\Pae\VPu+\varphi(s)\Pad
		=&\rho_1(s)\IntO\VPu\Pae\VPu+\varphi(s)\Pad-\IntO\VPu\Pae\VPu+\varphi(s) \Pad^2\\
		&+b\IntO\VPu^2\Pae\Pu+\psi(s)\Pad+b\IntO\VPu\varphi(s)\Pae\Pu+\psi(s)\Pad\textcolor{green}{.}
	\end{align*}
       Using (22),  we get
	\begin{align*}
		\lim_{s\to 0}\rho_1(s)
		&=\dfrac{a'(0)\Lu K\IntO\VPu\IntO\VPu^2+\IntO\VPu^3-bK\IntO\VPu^3}{\IntO\VPu^2}\\
            &=a'(0)\Lu K\IntO\VPu+\Pae 1-bK\Pad\IntO\VPu^3.
	\end{align*}
We conclude the result from (22).
\end{proof}

\section{Global Bifurcation Analysis}

Let $U=C_0^1\FeOmega$ and the positive cone 
\begin{align*}
   \mathcal{P}=\{u\in U:\mbox{$u(x)\geq 0$ for all $x\in\Omega$}\}. 
\end{align*}
In this section, we will re-write (P) to show the existence of an unbounded  continuum \break $\Coc\subset\Rr\times\PC\times\PC$ of positive solutions of (P).\\

 In the next result, we prove a existence of a continuum $\Coc$ of positive solutions of (P), that is, a maximal connected and closed set in the set of positive solutions of (P).

\begin{Theo}\label{MRp1}
Assume that $a(0)>0$. From the point
    \begin{align}
        \Pae\lambda,u,v\Pad=\Pae a(0)\lambda_1,0,0\Pad,
    \end{align}
    bifurcates an unbounded continuum $\Coc\subset\Rr\times\PC\times\PC$ of coexistence states of (P).

Moreover;
\begin{enumerate}
\item  If $b\leq 0$, then
$$
(a(0)\lambda_1,\infty)\subset \Proj_\Rr(\Coc)\subset (\MinA\lambda_1,\infty).
$$
\item If $b>0$ and (H$_1$) is verified, then 
$$
(a(0)\lambda_1,\infty)\subset \Proj_\Rr(\Coc)\subset (0,\infty).
$$
\item If $b>0$ and $a$ verifies (H$_2$), there exists $\lambda_0\leq 0$ such that 
$$
(a(0)\lambda_1,\infty)\subset \Proj_\Rr(\Coc)\subset (\lambda_0,\infty).
$$
\end{enumerate}
    
\end{Theo}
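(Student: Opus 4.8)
The plan is to reduce (P) to a single fixed-point equation for $u$ and then to run the global bifurcation theorem of Rabinowitz, feeding in the a priori bounds and non-existence results of Section 3 to control the $\lambda$-projection of the bifurcating continuum. Since the second equation of (P) determines $v$ uniquely from $u$ through $v=\rho(-\Delta+\sigma)^{-1}u=:\rho Su$, where $S$ is well defined by (4) and order-preserving, and since $a(0)>0$ makes the coefficient $a\left(\rho\int_\Omega Su\right)$ positive for $u$ near $0$ (and, more generally, on the open region of $C_0^1(\overline\Omega)$ where it does not vanish), problem (P) is equivalent, on that region, to
\[
u=\mathcal{G}(\lambda,u):=(-\Delta)^{-1}\left[\frac{\lambda u-u^2+b\rho\,u\,Su}{a\left(\rho\int_\Omega Su\right)}\right].
\]
The operator $\mathcal{G}(\lambda,\cdot)$ is compact on $C_0^1(\overline\Omega)$ since $(-\Delta)^{-1}$ is compact and the remaining factors are continuous; it satisfies $\mathcal{G}(\lambda,0)=0$, and its linearisation at $u=0$ is the compact linear operator $\frac{\lambda}{a(0)}(-\Delta)^{-1}$.

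Next I would identify the bifurcation values: $\lambda$ is one precisely when $a(0)/\lambda$ is an eigenvalue of $(-\Delta)^{-1}$, and the only eigenvalue admitting a positive eigenfunction is the principal one, $1/\lambda_1$ with eigenfunction $\varphi_1$, giving $\lambda=a(0)\lambda_1=\lambda^*$. By Krein--Rutman this eigenvalue is simple, consistently with the local analysis of Theorem 4.1. Applying the global bifurcation theorem in its positive-cone (unilateral) form, there emanates from $(\lambda^*,0,0)$ a continuum $\mathfrak{C}$ of solutions; the local curve of Theorem 4.1 shows that for small $s>0$ one has $u=s(\varphi_1+\varphi(s))\in\text{int}(\mathcal{P})$, whence $v=\rho Su\in\text{int}(\mathcal{P})$ by the strong maximum principle, so $\mathfrak{C}\subset\mathbb{R}\times\text{int}(\mathcal{P})\times\text{int}(\mathcal{P})$. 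Because (P) has no semi-trivial solutions (Section 2) and every coexistence state is strictly positive, the positive continuum cannot reach $\partial\mathcal{P}$ except at the trivial branch; the Rabinowitz alternative then leaves only two options, unboundedness or a return to $(\hat\lambda,0,0)$ at another bifurcation value, and the latter is excluded since $\lambda^*$ is the unique value carrying a positive eigenfunction. Hence $\mathfrak{C}$ is unbounded in $\mathbb{R}\times C_0^1(\overline\Omega)\times C_0^1(\overline\Omega)$. This paragraph is the main obstacle: converting the abstract continuum into one of genuinely interior-cone solutions and ruling out the return alternative in the presence of the nonlocal, nonlinear coefficient.

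The inclusions $\text{Proj}_{\mathbb{R}}(\mathfrak{C})\subset(\,\cdot\,,\infty)$ then follow from the non-existence results. If $b\leq0$, Proposition 3.1(a) excludes coexistence states for $\lambda\leq a_L\lambda_1$, so $\text{Proj}_{\mathbb{R}}(\mathfrak{C})\subset(a_L\lambda_1,\infty)$. If $b>0$ and $(H_1)$ holds, the bound $\|u\|_\infty\leq\lambda/\left(1-b\rho\|e_\sigma\|_\infty\right)$ of Proposition 3.2(a), whose denominator is positive under $(H_1)$, forces $\lambda>0$ for any coexistence state, giving $\text{Proj}_{\mathbb{R}}(\mathfrak{C})\subset(0,\infty)$. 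If $b>0$ and $(H_2)$ holds, Proposition 3.2(b) supplies $\lambda_0$ with no coexistence states for $\lambda\leq\lambda_0$, so $\text{Proj}_{\mathbb{R}}(\mathfrak{C})\subset(\lambda_0,\infty)$.

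For the reverse inclusion $(a(0)\lambda_1,\infty)\subset\text{Proj}_{\mathbb{R}}(\mathfrak{C})$ I would combine unboundedness with the a priori bounds. In each case, Proposition 3.1(b), Proposition 3.2(a), and Proposition 3.2(b) respectively bound $\|(u,v)\|_\infty$ for $\lambda$ in any compact set, and elliptic regularity upgrades this to a bound in $C_0^1(\overline\Omega)\times C_0^1(\overline\Omega)$. Thus $\mathfrak{C}$ cannot be unbounded over a bounded $\lambda$-range; since its projection is bounded below by the previous paragraph, the unboundedness must come from $\lambda$, i.e. $\sup\text{Proj}_{\mathbb{R}}(\mathfrak{C})=+\infty$. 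Finally, $\mathfrak{C}$ is connected, so $\text{Proj}_{\mathbb{R}}(\mathfrak{C})$ is an interval whose closure contains the bifurcation value $a(0)\lambda_1$ and which is unbounded above; any such interval contains $(a(0)\lambda_1,\infty)$, completing the proof.
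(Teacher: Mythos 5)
Your proposal is correct, and the final two paragraphs (controlling $\Proj_\Rr(\Coc)$ via the non-existence results and a priori bounds of Section 3, then using connectedness and unboundedness to capture $(a(0)\lambda_1,\infty)$) coincide with what the paper does. The route to the unbounded continuum, however, is genuinely different. The paper keeps the pair $(u,v)$ as the unknown, writes (P) as a compact perturbation of the identity on $U\times U$, and invokes the unilateral global bifurcation theorem for systems (Theorem 6.4.3 of [2], Theorem 1.1 of [9]); this produces \emph{four} alternatives, and the bulk of the proof is spent excluding the three bounded ones: a return to the trivial branch at $\overline\lambda\neq\lambda^*$ (ruled out by continuity of $\lambda\mapsto\sigma_1[-a(\int_\Omega v)\Delta+u-bv]$), and convergence to the semi-trivial states $(\overline u,0)$ and $(0,\overline v)$ (ruled out via the $v$-equation and the inequality $\sigma\Vm\leq\rho\Um$, respectively). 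You instead eliminate $v=\rho(-\Delta+\sigma)^{-1}u$, which is legitimate since $\sigma_1[-\Delta+\sigma]>0$ makes the second equation uniquely solvable, and reduce to a scalar nonlocal fixed-point problem to which the classical Rabinowitz alternative applies; the semi-trivial alternatives then never arise, which is a real simplification, and your exclusion of the return alternative (normalize, pass to the limit, invoke uniqueness of the positive principal eigenfunction) is the standard and correct substitute for the paper's eigenvalue-continuity argument. What the paper's system formulation buys is robustness — it would survive a $v$-equation that is not linear and explicitly solvable — and it delivers the continuum directly in $\Rr\times\PC\times\PC$ as stated, whereas you must push your $u$-continuum forward under the continuous map $u\mapsto(u,\rho(-\Delta+\sigma)^{-1}u)$ to recover it. One caveat applying equally to both arguments: the division by $a\Pae\IntO v\Pad$ (in the paper's operator $N$) or by $a\Pae\rho\IntO(-\Delta+\sigma)^{-1}u\Pad$ (in your $\mathcal{G}$) tacitly requires this quantity to stay positive along the continuum, which is automatic near the bifurcation point since $a(0)>0$ but is not addressed globally in either version; you at least flag the restriction explicitly.
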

\begin{proof}
First, from the result of Section 4, we conclude that $\Le=a(0)\Lu$ is a bifurcation point from the trivial solution $(0,0)$. 

On the other hand, note that it is possible rewrite the equations of (P) as follows
$$
\Gg(\lambda,u,v)=0
$$
where $\Gg:\Rr\times U\times U \mapsto U\times U$ is defined by
\begin{align*}
        \Gg\Pae\lambda,u,v\Pad=\begin{bmatrix}
		u  \\
		v				
	\end{bmatrix}-
         L\begin{bmatrix}
		\dfrac{\lambda u}{a(0)}  \\
		\rho u				
    \end{bmatrix}+N\Pae\lambda,u,v\Pad,   
    \end{align*}
where 
$$
L= \begin{bmatrix}
            \Pae-\Lapla\Pad^{-1} & 0\\
            0  & \Pae-\Lapla+\sigma\Pad^{-1}
        \end{bmatrix},
$$
and    
$$
N\Pae\lambda,u,v\Pad=L\begin{bmatrix}
		\lambda u\Pae\dfrac{1}{a\Pae\IntO v\Pad}-\dfrac{1}{a(0)}\Pad+\dfrac{buv-u^2}{a\Pae\IntO v \Pad}  \\
		0				
    \end{bmatrix}.
$$ 
Observe that 
\begin{align*}
   \dfrac{\|N\Pae\lambda,u,v\Pad\|_{_{\scriptscriptstyle U\times U}}}{\Noe\Pae u,v\Pad\Nod_{_{\scriptscriptstyle U\times U}}}\to 0\quad\mbox{as $\Noe\Pae u,v\Pad\Nod_{_{\scriptscriptstyle U\times U}}\to 0$.} 
\end{align*}
Now, we can apply Theorem 6.4.3 of [2], and following exactly the lines of Theorem 1.1 of [9] (see also Theorem 4.1 of [2]) and  conclude that there exists a continuum $\Coc$ of coexistence states of (P) emanating from $\Pae\Le,0,0\Pad$ which satisfies at least one of the following alternatives:
    \begin{itemize}
        \item[(A$_1$)] $\Coc$ is unbounded in $\Rr\times U\times U$.
        \item[(A$_2$)] There exists $\Pae\lambda_n,u_n,v_n\Pad\in\Coc$ such that $\lambda_n\to\Lb$ and $\Pae u_n,v_n\Pad\to\Pae 0,0\Pad$, with $\Lb\neq\Le$.
        \item[(A$_3$)] There exists $\Pae\lambda_n,u_n,v_n\Pad\in\Coc$ such that $\lambda_n\to\Lb$ and $\Pae u_n,v_n\Pad\to\Pae \UB,0\Pad$, with $\Lb\neq\Le$ and $\UB>0$.  
        \item[(A$_4$)] There exists $\Pae\lambda_n,u_n,v_n\Pad\in\Coc$ such that $\lambda_n\to\Lb$ and $\Pae u_n,v_n\Pad\to\Pae 0,\VB\Pad$, with $\Lb\neq\Le$ and $\VB>0$.
    \end{itemize}
    Now, we check the validity of each alternative and prove that only (A$_1$) holds.\\
    Alternative (A$_2$) is not valid: Suppose there exists $\Pae\lambda_n,u_n,v_n\Pad\in\overline{\Coc}$ such that $\lambda_n\to\Lb$ and $\Pae u_n,v_n\Pad\to\Pae 0,0\Pad$ in $U\times U$, with $\Lb\neq\Le$. Observe that
    $$
    \lambda_n=\sigma_1\left[-a\left(\int_\Omega v_n\right)\Delta+u_n-bv_n\right].
    $$
    Since $\Pae u_n,v_n\Pad\to (0,0)$ in $U\times U$, we conclude that
    $$
   \lambda_n= \sigma_1\left[-a\left(\int_\Omega v_n\right)\Delta+u_n-bv_n\right]\to a(0)\lambda_1,
    $$
    a contradiction.
    
    \noindent Alternative (A$_3$) is false: 
    Suppose there exists $\Pae\lambda_n,u_n,v_n\Pad\in\overline{\Coc}$ such that $\lambda_n\to\Lb$ and $\Pae u_n,v_n\Pad\to\Pae \UB,0\Pad$ in  $U\times U$, with $\Lb\neq\Le$ and $\UB>0$. 
Since $v_n$ is bounded in $C^{2,\gamma}(\overline\Omega)$, $\gamma\in (0,1)$, it is easy to deduce that $v_n\to v^*\geq 0$ solution  of 
$$
-\Delta v^*+\sigma v^*=\rho  \UB>0,
$$
a contradiction because $v^*=0$.
    
    \noindent Alternative (A$_4$) is not valid: Suppose there exists $\Pae\lambda_n,u_n,v_n\Pad\in\overline{\Coc}$ such that $\lambda_n\to\Lb$ and $\Pae u_n,v_n\Pad\to\Pae 0,\VB\Pad$ in $U\times U$, with $\Lb\neq\Le$ and $\VB>0$.  Using (10) we get
    \begin{align*}
        \sigma\Noe v_n\NodI\mI\rho\Noe u_n\NodI.
    \end{align*}
    Thus, taking limit,
    \begin{align*}
        \sigma\Noe\VB\NodI\mI 0,
    \end{align*}
    which is a contradiction.\\
    Therefore, from the point $\Pae a\Pae 0\Pad\Lu,0,0\Pad$ bifurcates an unbounded continuum $\Coc\subset\Rr\times\Cuz\FeOmega\times\Cuz\FeOmega$ of coexistence states of (P).

Now, assume that $b\leq 0$. then, by Proposition 3.1 we get that existence of a priori bounds in $L^\infty$, and by elliptic regularity, in $C^1\FeOmega$. Moreover, by  Proposition 3.1 we deduce non-existence of positive solutions for $\lambda\leq\MinA\lambda_1$. This finishes this case.

Assume $b>0$ and (H$_1$). By Proposition 3.2,  we conclude the existence of a priori bounds and non-existence for $\lambda\leq 0$.

Finally, assume $b>0$ and $a$ verifies $(H_2)$. Again, we obtain the existence of a priori bounds and non-existence of positive solutions for $\lambda\leq \lambda_0$ by Proposition 3.2.

This completes the proof.

\end{proof}

\section{Uniqueness result}
In this section we show a uniqueness result of coexistence states of (P). Specifically,
\begin{Theo}\label{uni}
Assume $a$ is increasing. Then, there exists $b_0>0$
such that (P) possesses at most a coexistence state for $b\in(-b_0,b_0)$.
\end{Theo}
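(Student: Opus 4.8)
The plan is to argue by contradiction, combining the a priori bounds of Section 3 with a compactness argument and a non-degeneracy property of the linearization at the (unique) solution of the decoupled problem $b=0$. As a preliminary reduction I would eliminate $v$: the second equation gives $v=\rho(-\Delta+\sigma)^{-1}u=:\rho S u$, and since $S$ is self-adjoint with $S1=e_\sigma$ one has $\int_\Omega v=\rho\int_\Omega u\,e_\sigma$, so a coexistence state is equivalent to a positive solution of
$$
-a\Bigl(\rho\textstyle\int_\Omega u\,e_\sigma\Bigr)\Delta u=\lambda u-u^2+b\rho\,u\,(Su)\quad\text{in }\Omega,\qquad u=0\ \text{on }\partial\Omega.
$$
Here $\lambda$ is regarded as fixed (so $b_0$ may depend on $\lambda$, and for $\lambda\le a(0)\lambda_1$ there is nothing to prove by the non-existence results). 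For $|b|$ small the hypothesis $(H_1)$ holds once $b\rho\|e_\sigma\|_\infty<1$, so Propositions 3.1 and 3.2(a) yield a uniform $L^\infty$ bound, hence by elliptic regularity a uniform $C^{2,\gamma}(\overline\Omega)$ bound, for all coexistence states with $|b|$ small.

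Suppose the statement fails. Then there are $b_n\to 0$ and two distinct coexistence states $(u_n^1,v_n^1)\neq(u_n^2,v_n^2)$ of (P) at $b=b_n$. By the uniform bounds and compact embedding, along a subsequence both sequences converge in $C^1(\overline\Omega)\times C^1(\overline\Omega)$ to a solution of the limiting problem $b=0$. This limit cannot be semi-trivial (excluded as in Section 2, using $\rho>0$) and cannot be trivial either: for fixed $\lambda\neq a(0)\lambda_1$ the operator $\mathcal L(\lambda)$ is invertible, so $(0,0)$ is an isolated solution and no small nonzero solutions accumulate at it. Hence each limit is a genuine coexistence state of the $b=0$ problem, which by case (i) (valid because $a$ is increasing) is the unique one, call it $(u_0,v_0)$. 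In particular both sequences converge to $(u_0,v_0)$, so every coexistence state for small $|b|$ lies near $(u_0,v_0)$.

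Next I would normalize the difference. Subtracting the two copies of the equations, expanding the nonlocal coefficient by the mean value theorem as $a(\int v_n^1)-a(\int v_n^2)=a'(\xi_n)\int_\Omega(v_n^1-v_n^2)$, and dividing by $\|(u_n^1-u_n^2,v_n^1-v_n^2)\|_{C^1}$, the quotients stay bounded in $C^{2,\gamma}$ (the leading coefficient is $\ge a(0)>0$). Passing to a further limit and using $b_n\to0$, $\xi_n\to t_0:=\int_\Omega v_0$ and $\Delta u_n^2\to\Delta u_0$, one obtains a unit vector $(\phi,\psi)$ solving the linearization of the $b=0$ problem at $(u_0,v_0)$:
$$
-a(t_0)\Delta\phi-a'(t_0)\Bigl(\textstyle\int_\Omega\psi\Bigr)\Delta u_0-\lambda\phi+2u_0\phi=0,\qquad -\Delta\psi+\sigma\psi-\rho\phi=0.
$$
Making this convergence rigorous (so that the limit genuinely solves the linearized system, with $\Delta u_n^2\to\Delta u_0$ in $C^{\gamma}$) is one technical point; the decisive one is the following non-degeneracy claim, which contradicts the existence of such a nonzero $(\phi,\psi)$ and is the step I expect to be the main obstacle.

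The crux is to show that, when $a$ is increasing, the linearized system above has only the trivial solution. From the second equation $\psi=\rho S\phi$, so $\int_\Omega\psi=c:=\rho\int_\Omega\phi\,e_\sigma$ and the first equation becomes $L_0\phi=c\,a'(t_0)\Delta u_0$ with $L_0:=-a(t_0)\Delta+2u_0-\lambda$. Since $u_0$ solves the logistic equation, $\sigma_1[-a(t_0)\Delta+u_0-\lambda]=0$ with positive eigenfunction $u_0$, whence $\sigma_1[L_0]>0$ and $L_0^{-1}$ is strongly positive. The maximum principle gives $u_0\le\lambda$, so $\Delta u_0=u_0(u_0-\lambda)/a(t_0)\le0$ and therefore $L_0^{-1}\Delta u_0\le0$, giving $\int_\Omega(L_0^{-1}\Delta u_0)e_\sigma\le0$. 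Substituting $\phi=c\,a'(t_0)L_0^{-1}\Delta u_0$ into the definition of $c$ yields $c=c\,a'(t_0)\rho\int_\Omega(L_0^{-1}\Delta u_0)e_\sigma$; as $a'(t_0)\ge0$ the coefficient on the right is $\le0<1$, forcing $c=0$, hence $L_0\phi=0$, $\phi=0$ and $\psi=0$. This contradiction closes the argument, and it is precisely here that monotonicity of $a$ (through $a'\ge0$ together with the sign of $\Delta u_0$) is used in an essential way.
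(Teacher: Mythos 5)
Your proposal is correct and its decisive computation coincides with the paper's, but the packaging is genuinely different. The paper first treats $b=0$ exactly as you do (reduction via $\int_\Omega v=\rho\int_\Omega e_\sigma u$ and uniqueness of the nonlocal logistic solution $u_0$ for $a$ increasing), and then applies the Implicit Function Theorem in the parameter $b$ at $(0,u_0,v_0)$: it proves that the linearization is an \emph{isomorphism} by explicitly solving the linearized system, converting the nonlocal term through multiplication by $e_\sigma$ and an auxiliary function $H$ solving $(-a(t_0)\Delta+2u_0-\lambda)H=e_\sigma$, the solvability resting on the denominator $1+\rho\int_\Omega HM>0$ with $M=-a'(t_0)\Delta u_0\ge 0$. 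Your non-degeneracy claim is exactly the injectivity half of this: your quantity $1-\rho\,a'(t_0)\int_\Omega e_\sigma\,L_0^{-1}\Delta u_0$ equals the paper's $1+\rho\int_\Omega HM$ by self-adjointness of $L_0^{-1}$, and your use of $\sigma_1[L_0]>0$ and $\Delta u_0\le 0$ mirrors the paper's (29) and $M\ge 0$. What your route buys is that the compactness/normalized-difference argument explicitly supplies the step the IFT version leaves implicit, namely that \emph{every} coexistence state for small $|b|$ must lie near $(u_0,v_0)$ (the IFT only yields uniqueness in a neighbourhood of $(0,u_0,v_0)$); what it costs is the extra technical work of justifying $\Delta u_n^2\to\Delta u_0$ and the nontriviality of the blow-up limit, which you correctly flag. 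One loose end in your write-up: the dismissal of $\lambda\le a(0)\lambda_1$ "by the non-existence results" is only fully covered for $b\le 0$; for small $b>0$ the paper's non-existence region is $\lambda\le 0$, so for $0<\lambda<a(0)\lambda_1$ you must (and implicitly do) fall back on the isolatedness of $(0,0)$, while at the bifurcation value $\lambda=a(0)\lambda_1$ itself neither your argument nor the paper's addresses possible accumulation of pairs of small solutions at the non-invertible point $\mathcal{L}(a(0)\lambda_1)$; this is a minor edge case shared with the original proof rather than a defect specific to yours.
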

\begin{proof}
First, we study the case $b=0$. Observe that in this case, multiplying the second equation of (P) by $e_\sigma$ and integrating in $\Omega$, we get that
$$
\int_\Omega v=\rho\int_\Omega e_\sigma u.
$$
Hence, in the case $b=0$, (P) is equivalent to  
\begin{align}\tag{$P_0$}
		\begin{cases}\begin{array}{rll}
				-a\Pae\rho \IntO e_\sigma u\Pad\Lapla u=&\hspace*{-0.25cm}\lambda u-u^2,&\mbox{ in }\Omega			\\
				-\Lapla v+\sigma v=&\hspace*{-0.25cm}\rho u, &\mbox{ in }\Omega			\\
				u=v=&\hspace*{-0.25cm}0, &\mbox{ on }\partial\Omega
		\end{array}
  \end{cases}
	\end{align}
Since $a$ is increasing, it follows by Theorem 5 in [5] the uniqueness of $u_0$ positive solution of
$$
-a\Pae\rho \IntO e_\sigma u\Pad\Lapla u=\lambda u-u^2,\quad\mbox{in $\Omega$,}\qquad u=0\quad \mbox{on $\partial\Omega$.}	
$$
Now, it is evident the uniqueness of a positive solution $v_0$ of 
the linear equation
$$
-\Lapla v+\sigma v=\rho u_0, \quad\mbox{ in $\Omega$,}\quad		
				v=0\quad\mbox{ on $\partial\Omega.$}
$$
Moreover, observe that
$$
\sigma_1\left[-a\Pae\rho \IntO e_\sigma u_0\Pad\Lapla+u_0-\lambda\right]=0, 
$$
and then, 
\begin{equation}
   \sigma_1\left[-a\Pae\rho \IntO e_\sigma u_0\Pad\Lapla+2u_0-\lambda\right]>0. 
\end{equation}

Next, we study the case $b\neq 0$. In this case we apply the Implicit Function Theorem. Consider the spaces $E$ and $F$ defined in (18) and define the map $\mathcal{G}:\Rr\times E\mapsto F$ by
\begin{align*}
	\mathcal{G}(b,u,v)=\begin{bmatrix}	
		-a\Pae\IntO v\Pad\Delta u-\lambda u+u^2-buv		\\
		-\Delta v+\sigma v-\rho u					
	\end{bmatrix}.
\end{align*}
Observe that $\mathcal {G}$ is derivable and $\mathcal{G}(0,u_0,v_0)=0$. Its derivative at $(0,u_0,v_0)$ is given by
\begin{align*}
	\mathcal{G}_{(u,v)}(0,u_0,v_0)\left[
\begin{array}{c}
\xi\\
\eta
\end{array}
\right]=\begin{bmatrix}	
		-a\Pae\IntO v_0\Pad\Delta\xi+M(x)\IntO\eta+N(x)\xi		\\
		-\Delta \eta+\sigma\eta-\rho\xi					
	\end{bmatrix},
\end{align*}
where 
$$
M(x)=\dfrac{a'\Pae\IntO v_0\Pad}{a\Pae\IntO v_0\Pad}(\lambda u_0-u_0^2),\qquad N(x)=2u_0-\lambda.
$$
Now we show that $\mathcal{G}_{(u,v)}(0,u_0,v_0)$ is an   isomorphism from $E$ onto $F$. Given $(f,g)\in F$, we have to show the existence and uniqueness of solution of the problem 
 $$
 \mathcal{G}_{(u,v)}(0,u_0,v_0)
\left[
\begin{array}{c}
\xi\\
\eta
\end{array}
\right]=\left[
\begin{array}{c}
f\\
g
\end{array}
\right],
$$
or equivalently
\begin{align}
	\begin{cases}\begin{array}{rll}
			-a\Pae\IntO v_0\Pad\Delta\xi+M(x)\IntO\eta +N(x)\xi=&\hspace*{-0.25cm}f, &\mbox{ in }\Omega			\\
			-\Delta \eta+\sigma\eta-\rho\xi=&\hspace*{-0.25cm}g, &\mbox{ in }\Omega			\\
			\eta=\xi=&\hspace*{-0.25cm}0, &\mbox{ on }\partial\Omega.
	\end{array}\end{cases}
\end{align}
Observe that (30) is a linear system including a non-local term in $\eta$ in the $\xi$-equation. We transform this linear system into another linear one.  Indeed, multiplying the second equation by $e_\sigma$, solution of (5), we get that 
 $$
 \int_\Omega\eta=\rho\int_\Omega e_\sigma\xi +\int_\Omega e_\sigma g.
 $$
Then, (30) is equivalent to
\begin{align}
	\begin{cases}\begin{array}{rll}
			-a\Pae\rho\IntO e_\sigma u_0\Pad\Delta\xi+N(x)\xi+\rho M(x)\IntO  e_\sigma\xi=&\hspace*{-0.25cm}h, &\mbox{ in }\Omega			\\
			-\Delta \eta+\sigma\eta-\rho\xi=&\hspace*{-0.25cm}g, &\mbox{ in }\Omega			\\
			\eta=\xi=&\hspace*{-0.25cm}0, &\mbox{ on }\partial\Omega
	\end{array}\end{cases}
\end{align}
 where
 $$
 h(x)=f(x)-M(x)\IntO e_\sigma g.
 $$
 Observe that from (29) it follows that
\begin{equation}
\Su\Que-a\Pae\rho\int_\Omega e_\sigma u_0\Pad\Delta+N(x)\Qud>0,
\end{equation}
and then, 
there exists a unique positive solution, denoted by $H$ of the linear equation
\begin{align*}
	\begin{cases}\begin{array}{rll}
			-a\Pae\rho\IntO e_\sigma u_0\Pad\Delta H+N(x)H=&\hspace{-0.25cm}e_\sigma, &\mbox{ in }\Omega\\
			H=&\hspace{-0.25cm}0,&\mbox{ on }\partial\Omega.
		\end{array}
  \end{cases}
\end{align*}
Multiplying this equation by $\xi$ and integrating in $\Omega$, we obtain
$$
\int_\Omega Hh-\rho\int_\Omega H M\int_\Omega e_\sigma\xi=\int_\Omega e_\sigma\xi.
$$
Then, 
$$
\int_\Omega e_\sigma\xi=\frac{\int_\Omega H h}{1+\rho\int_\Omega H M},
$$
where we have used that $M\geq 0$ in $\Omega$. Hence, the first equation of (31) is equivalent to
\begin{align}
	\begin{cases}\begin{array}{rll}
			-a\Pae\rho\IntO e_\sigma u_0\Pad\Delta\xi+N(x)\xi=&\hspace{-0.25cm}h-\rho M(x)\dfrac{\IntO H f}{1+\rho\IntO H M}, &\mbox{ in }\Omega\\
			\xi=&\hspace{-0.25cm}0,&\mbox{ on }\partial\Omega
		\end{array}.\end{cases}
\end{align}
By (32), it follows the existence and uniqueness of $\xi$ solution of (33). Since $\Su\Que-\Delta+\sigma\Qud>0$, it follows  the existence and uniqueness of $\eta$. This completes the proof.
\end{proof}   
\section{Case $a(0)=0$}
In this section, we study the case when $a(0)=0$. Observe that in this case we can not apply directly the results of previous sections. For this, it is necessary to consider the following auxiliary problem
    \begin{align}\tag{NP$_1$}
		\begin{cases}\begin{array}{rll}
				-a_\varepsilon\Pae\IntO v\Pad\Lapla u=&\hspace*{-0.25cm}\lambda u-u^2+buv,&\mbox{ in }\Omega			\\
				-\Lapla v+\sigma v=&\hspace*{-0.25cm}\rho u, &\mbox{ in }\Omega			\\
				u=v=&\hspace*{-0.25cm}0, &\mbox{ on }\partial\Omega
		\end{array}
  \end{cases}
	\end{align}
where 
$$
\An(s):=a(s)+\varepsilon,\quad\mbox{for $\varepsilon>0$.} 
$$
\begin{Theo}
\label{teocero}
Assume that $a(0)=0$ and $b\leq 0$ or $b>0$ and $(H_1)$ or $(H_2)$. Then, for $\lambda>0$ there exists at least a coexistence state of (P).
\end{Theo}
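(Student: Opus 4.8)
The plan is to regularize the degenerate diffusion and pass to the limit. For $\varepsilon>0$ the coefficient $\An=a+\varepsilon$ satisfies $\An(0)=\varepsilon>0$, so the whole bifurcation machinery of Sections 4 and 5 applies to the auxiliary problem (NP$_1$). First I would check that the structural hypotheses survive the perturbation: if $b\leq 0$ no condition on $a$ is needed; hypothesis $(H_1)$, being $b\rho\Noe e_\sigma\NodI<1$, does not involve $a$ at all; and if $a$ satisfies $(H_2)$ then $\An(s)/s=a(s)/s+\varepsilon/s\to+\infty$, so $\An$ also satisfies $(H_2)$. Hence Theorem \ref{MRp1} gives, for each $\varepsilon>0$, an unbounded continuum $\Coce$ of coexistence states of (NP$_1$) with $(\varepsilon\Lu,\infty)\subset\Proj_\Rr(\Coce)$. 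Fixing $\lambda>0$ and a sequence $\varepsilon_n\downarrow 0$ with $\varepsilon_n<\lambda/\Lu$, I obtain for this $\lambda$ coexistence states $(u_n,v_n)$ of (NP$_1$) with $\varepsilon=\varepsilon_n$.

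The second step is to bound $(u_n,v_n)$ uniformly in $n$ and to extract a limit. When $b\leq 0$, or $b>0$ under $(H_1)$, the bounds of Propositions 3.1 and 3.2(a) are explicit and do not depend on $a$, so they hold uniformly in $n$. Under $(H_2)$ I would repeat the contradiction argument of Proposition 3.2(b) with $a$ replaced by $\An$: inequalities (8)--(12) are independent of the diffusion coefficient, and in the estimate (16) one has $\An(s_n)/s_n\geq a(s_n)/s_n\to+\infty$ along any blow-up sequence (as $s_n\to+\infty$), which again forces $\Noe w_n\Nod_{W^{2,p}}\to 0$, contradicting $\Noe w_n\NodI=1$; thus the bound is uniform in $n$. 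A uniform $L^\infty$ bound upgrades, via Agmon--Douglis--Nirenberg estimates applied to both equations, to a uniform $C^{1}\FeOmega$ bound, so a subsequence satisfies $(u_n,v_n)\to(\US,\VS)$ in $\Cuz\FeOmega\times\Cuz\FeOmega$. Since $\An\to a$ uniformly on compact sets and $\IntO v_n\to\IntO\VS$, passing to the limit in (NP$_1$) shows that $(\US,\VS)\geq 0$ solves (P).

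The main obstacle is precisely to rule out degeneration of the limit to the trivial state, which is delicate because $a(0)=0$ makes the limiting diffusion vanish exactly where the branch of (P) would start. Here I would exploit the eigenvalue identity coming from the first equation: each $u_n>0$ is a principal eigenfunction, whence
$$
\lambda=\Su\Que -\An\Pae\IntO v_n\Pad\Lapla+u_n-bv_n\Qud.
$$
Arguing by contradiction, suppose $\US\equiv 0$. Then $\Noe u_n\NodI\to 0$, and by (9) $\Noe v_n\NodI\mI\rho\Noe u_n\NodI\Noe e_\sigma\NodI\to 0$, so $\IntO v_n\to 0$; consequently the diffusion coefficient $d_n:=\An(\IntO v_n)=a(\IntO v_n)+\varepsilon_n\to a(0)=0$ and the zeroth order term $u_n-bv_n\to 0$ in $L^\infty$. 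Using that $\Su$ is monotone in the potential and that adding a constant simply shifts it,
$$
\lambda=\Su\Que -d_n\Lapla+u_n-bv_n\Qud\mI d_n\Lu+\Noe u_n-bv_n\NodI\to 0,
$$
contradicting $\lambda>0$. Therefore $\US\gneq 0$, hence $\US>0$ by the strong maximum principle, and then $-\Lapla\VS+\sigma\VS=\rho\US>0$ yields $\VS>0$. Thus $(\US,\VS)$ is a coexistence state of (P) for every $\lambda>0$, as claimed. The only genuinely new difficulty compared with Sections 3--5 is this last positivity argument, since the a priori bounds and the compactness are inherited almost verbatim from the nondegenerate case.
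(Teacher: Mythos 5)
Your overall strategy (regularize with $a_\varepsilon=a+\varepsilon$, apply the global bifurcation theorem to (NP$_1$), obtain uniform a priori bounds, pass to the limit, and rule out the trivial limit) is exactly the paper's, and your verification that $(H_1)$, $(H_2)$ and the case $b\le 0$ survive the perturbation, as well as the uniform $L^\infty$ bounds, are fine. The gap is in the compactness step. You claim that the uniform $L^\infty$ bound ``upgrades, via Agmon--Douglis--Nirenberg estimates applied to both equations, to a uniform $C^1(\overline\Omega)$ bound.'' For the $v$-equation this is correct, since its diffusion is fixed. For the $u$-equation the elliptic estimate reads
$$
\|u_n\|_{W^{2,p}}\leq \frac{C}{d_n}\,\bigl\|\lambda u_n-u_n^2+bu_nv_n\bigr\|_{p},\qquad d_n:=a_{\varepsilon_n}\Bigl(\int_\Omega v_n\Bigr),
$$
and nothing at that stage prevents $d_n\to 0$ (indeed $a(0)=0$ and $\varepsilon_n\downarrow 0$ is precisely the dangerous regime). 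So the existence of a $C^1$-limit $u^*$ is not justified, and your subsequent contradiction argument --- which starts from ``suppose $u^*\equiv 0$'' --- is run on an object you have not yet produced. The paper avoids this circularity by ordering the steps differently: it first extracts the limit of $v_n$ only, shows $v^*\gneq 0$ by contradiction using the explicit logistic subsolution bound (34) (if $v^*=0$ then $u_\varepsilon\geq\frac{\lambda}{2\|\varphi_1\|_\infty}\varphi_1$, forcing $v^*>0$), deduces from this that $d_n\to a\bigl(\int_\Omega v^*\bigr)>0$, and only then invokes elliptic regularity for the $u$-equation.

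The good news is that your positivity mechanism is salvageable and is a legitimate alternative to the paper's subsolution argument, provided you run it \emph{before} claiming compactness of $u_n$. For instance: pass to the limit in $v_n$ alone; if $v^*=0$, then testing the $v$-equation with $\varphi_1$ gives $\int_\Omega u_n\varphi_1=\frac{\lambda_1+\sigma}{\rho}\int_\Omega v_n\varphi_1\to 0$, and the Rayleigh quotient with test function $\varphi_1$ in the identity $\lambda=\sigma_1\bigl[-d_n\Delta+u_n-bv_n\bigr]$ yields $\lambda\leq d_n\lambda_1+\|\varphi_1\|_\infty\bigl(\int_\Omega (u_n+|b|v_n)\varphi_1\bigr)/\int_\Omega\varphi_1^2\to 0$, a contradiction. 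This restores $v^*\gneq 0$, hence $d_n$ bounded away from zero, and the rest of your limit passage then goes through (note also that once $v^*>0$, the positivity of $u^*$ is immediate from $-\Delta v^*+\sigma v^*=\rho u^*$, so your separate eigenvalue argument for $u^*$ is not needed). As written, however, the proposal has a genuine ordering gap at the one point where the degeneracy $a(0)=0$ actually bites.
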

\begin{proof}
For each $\varepsilon>0$ there exists an unbounded continuum
$\mathcal{C}_\varepsilon$ of positive solutions of (NP$_1$) bifurcating from the trivial solution at
$$
\lambda_\varepsilon^*=a_\varepsilon(0)\lambda_1=\varepsilon \lambda_1,
$$
and 
$$
(\varepsilon \lambda_1,\infty)\subset \mbox{Proj}_\Rr(\mathcal{C}_\varepsilon).
$$
Hence, for $\lambda>0$ fixed, there exists $\varepsilon_0>0$ such that there exists at least a coexistence state $(u_\varepsilon,v_\varepsilon)$ of (P) for $\varepsilon<\varepsilon_0$. Observe that:
    $$
    -a\Pae\IntO v_\varepsilon\Pad\Delta u_\varepsilon\geq u_\varepsilon(\lambda+(bv_\varepsilon)_{_{\scriptscriptstyle L}}-u_\varepsilon).
    $$
Thus, $u_\varepsilon$ is supersolution of a logistic equation of the form
$$
-d\Delta w=w(\mu-w)\quad\mbox{in $\Omega$,}\quad w=0\quad \mbox{on $\partial\Omega$.}
$$
It is well known that
$$
\frac{\mu-d\lambda_1}{\|\varphi_1\|_\infty}\varphi_1(x)\leq w,
$$
where $\varphi_1$ is a positive eigenfunction associated to $\lambda_1$. Then,  
\begin{equation}
\dfrac{\lambda+(bv_\varepsilon)_{_{\scriptscriptstyle L}}-a\Pae\IntO v_\varepsilon\Pad\lambda_1}{\|\varphi_1\|_\infty}\varphi_1(x)\leq u_\varepsilon.
\end{equation}
We claim that
\begin{equation}
    \|u_\varepsilon\|_\infty\leq C
\end{equation}
for some positive constant independent of $\varepsilon$.

Indeed, assume first that $b\leq 0$. In this case, by Proposition 3.1 we get that $\|u_\varepsilon\|_\infty\leq \lambda$.

In the case $b>0$ and $(H_1)$, we obtain from Proposition 3.2 that
$$
\|u_\varepsilon\|_\infty\leq C,
$$
with $C$ a positive constant independent of $\varepsilon$.

Finally, when $b>0$ and $(H_2)$, we can follow again the proof of Proposition 3.2 and conclude that 
$$
\|u_\varepsilon\|_\infty\leq C,
$$
with $C$ a positive constant independent of $\varepsilon$.

Hence, in all the cases we have proved the claim (35).

Going back to the equation of $v_\varepsilon$, we obtain that $v_\varepsilon$ is bounded in $W^{2,p}(\Omega)$, for any $p>1$, and then passing to the limit
$$
v_\varepsilon\to v^*\geq 0\quad\mbox{in $C^1\FeOmega$}.
$$
Assume that  $v^*=0$. Then, by (34)
$$
u_\varepsilon(x)\geq \dfrac{\lambda}{2\|\varphi_1\|_\infty}\varphi_1(x).
$$
Then,
$$
-\Delta v_\varepsilon+\sigma v_\varepsilon=\rho u_\varepsilon\geq C\varphi_1(x),
$$
and then $v^*(x)>0$ for all $x\in\Omega$. As consequence, $v^*>0$ in $\Omega$, and then 
$$
a\Pae\IntO v_\varepsilon\Pad\to a\Pae\IntO v^*\Pad>0.
$$
Hence, $u_\varepsilon$ is bounded in  $W^{2,p}(\Omega)$, and then 
$$
u_\varepsilon\to u^*> 0\quad\mbox{in $C^1\FeOmega$}.
$$
It is clear that $(u^*,v^*)$ is a coexistence state of (P). This finishes the proof.
\end{proof}
\noindent{\bf Acknowledgments:} Marcos A. Viana Costa and Yino Cueva Carranza has been partially supported by PROPG/Unesp.

\bibliographystyle{abbrv}  

\end{document}